\newcommand{\mm}{\mathfrak m}
\newcommand{\N}{\mathbb{N}}
\newcommand{\Z}{\mathbb{Z}}
\newcommand{\Bc}{\mathcal{B}}
\newcommand{\Sc}{\mathcal{S}}
\DeclareMathOperator{\pnt}{\raise 0.5mm \hbox{\large\bf.}}
\DeclareMathOperator{\gin}{gin}
\DeclareMathOperator{\spanK}{span}
\DeclareMathOperator{\rlex}{rlex}
\DeclareMathOperator{\ini}{in}
\DeclareMathOperator{\Kern}{Ker}
\DeclareMathOperator{\chara}{char}
\DeclareMathOperator{\rank}{rank}
\DeclareMathOperator{\reg}{reg}
\DeclareMathOperator{\Tor}{Tor}
\DeclareMathOperator{\Sh}{Sh}
\DeclareMathOperator{\maxSh}{maxSh}
\DeclareMathOperator{\minSh}{minSh}
\DeclareMathOperator{\GL}{GL}
\def\ie{\hbox{i.\,e. }}
\newtheorem{thm}{\bf Theorem}[section]
\newtheorem{lem}[thm]{\bf Lemma}
\newtheorem{prop}[thm]{\bf Proposition}
\theoremstyle{definition}
\newtheorem{rem}[thm]{\bf Remark}
\newtheorem{ex}[thm]{\bf Example}
\theoremstyle{plain}
\newtheorem*{thm*}{Theorem}
\title{Generic initial ideals and fibre products}
\author{Aldo Conca}
\address{Dipartimento di Matematica, Universit\'a di Genova, 16146 Genova, Italy}
\email{conca@dima.unige.it}
\author{Tim R\"omer}
\address{Fachbereich Mathematik/Informatik, Institut f\"ur Mathematik, Universit\"at Osna\-br\"uck, Albrechtstr. 28a, 49069 Osnabr\"uck, Germany}
\email{troemer@uos.de}
\begin{document}

\begin{abstract}
We study the behavior of generic initial ideals with respect to fibre products.
In our main result we determine the generic initial ideal of the fibre product with respect to
the reverse lexicographic order. As an application we compute
the symmetric algebraic shifted complex of two disjoint simplicial complexes as was conjectured by Kalai. This result is the symmetric analogue of a theorem of Nevo
who determined the exterior algebraic shifted complex of two disjoint simplicial complexes as predicted by Kalai.
 \end{abstract}

\thanks{Both authors thank CRUI and DAAD  for the financial support given through the Vigoni project}

\maketitle
%
%
%
\section{Introduction}
\label{intro}
Let $K$ be a field of characteristic $0$, $S=K[x_1,\dots,x_n]$ and $I$ be a graded  ideal of $S$.
Unless otherwise said, initial terms are taken with respect to the degree reverse lexicographic order,
rlex for short.  The generic initial ideal $\gin(I)$ of $I$ is an  important object.
It reflects many  homological, geometrical and combinatorial properties of the associated rings and varieties.
The ideal   $\gin(I)$ is defined as the ideal of  initial  terms of  $\varphi(I)$ where $\varphi$
is a generic element of $\GL_n(K)$. Here $\GL_n(K)$  acts  on $S$ as the group of
graded $K$-algebra isomorphisms.
See, e.g.,  Eisenbud \cite{EI95} for the construction,
in particular for the precise definition of ``generic'' element,
and \cite{GR98, HE01} for results related to the generic initial ideal.

 In practice, to compute $\gin(I)$,  one takes a random element $\varphi\in \GL_n(K)$ and computes, with  CoCoA \cite{CoCo}, Macaulay 2 \cite{MA2} or  Singular \cite{GPS05}, the ideal of leading terms of $\varphi(I)$. The resulting ideal is, with very high probability,  $\gin(I)$.
From the theoretical point of view  it is in general   difficult  to describe $\gin(I)$ explicitly. Just to give an example, important conjectures, such as the Fr\"oberg conjecture on  Hilbert functions,  would be solved if we  could only say what is the gin  of a generic complete intersection.

Generic initial ideals can  also be defined in the exterior algebra.
In the exterior algebra context  Kalai made in \cite{KA01} conjectures about the behavior of generic initial ideals  of squarefree monomial ideals  with respect standard operations  on simplicial complexes.
Two of these  operations have purely algebraic counterparts:   they corresponds   to the tensor product  and the fibre product over $K$ of standard graded $K$-algebras.  If $A=\oplus A_i$ and $B=\oplus B_i $ are standard graded $K$-algebras
the tensor product $A\otimes_K B=\oplus ( A_i\otimes_K B_j)$ is a (bi)graded $K$-algebra and the fibre product  $A\circ B$ is,  by definition, $A\otimes_K B/(A_1\otimes B_1)$.  Note that we have the short exact sequence
\begin{equation}\label{strfibre}
0\to A\circ B \to  A\oplus B\to K\to 0.
\end{equation}

If we have presentations $A=K[x_1,\dots,x_n]/I$ and $B=K[x_{n+1},\dots,x_{n+m}]/J$ then $A\otimes_K B$ has  presentation    $K[x_1,\dots,x_{n+m}]/I+J$  and $A\circ B$ has presentation $K[x_1,\dots,x_{n+m}]/F(I,J)$ where
$$F(I,J)=I+J+Q$$
 and $Q=(x_i : 1\leq i\leq n)(x_j : n+1\leq j\leq m)$.

Denote by  $\gin_{n}(I)$ the  gin  of   $I$ in $K[x_1,\dots,x_n]$ with respect to rlex induced by $x_1>\dots>x_n$.
Similarly denote by   $\gin_{m}(J)$ the  gin  of   $J$ in $K[x_{n+1},\dots,x_{n+m}]$  with respect to rlex
induced by $x_{n+1}>\dots>x_{n+m}$.
Kalai's conjectures \cite{KA01} correspond, in the symmetric algebra,  to the following statements:

 \begin{equation}\label{K1}
 \gin(I+J)=\gin(\gin_n(I)+\gin_m(J)),
  \end{equation}

\begin{equation}\label{K2}
\gin(F(I,J))=\gin(F(\gin_n(I),\gin_m(J)).
 \end{equation}

As observed by Nevo \cite{NE03},  the exterior algebra counterpart of
Equality (\ref{K1})  does not hold.
Hence it is not surprising to discover that (\ref{K1}) does not hold in the symmetric algebra as well.
Here is a simple example:

\begin{ex}
Set  $n=2, m=3$ and $I=(x_1,x_2)^2$ and
$J=(x_3^2,x_3x_4, x_4x_5)$. Then  $\gin_2(I)=I$ since $I$ is strongly stable,
$\gin_3(J)=(x_3,x_4)^2$
since $J$ defines a Cohen-Macaulay ring of codimension $2$.
But  $\gin(I+J)$ and $\gin(\gin_n(I)+\gin_m(J))$  differ  already in degree $2$
since $x_1x_4$ belongs to the second, but not to the first ideal.
\end{ex}

On the other hand,  Nevo \cite{NE03}  proved  the exterior algebra analog of (\ref{K2}) for squarefree monomial ideals.  Our main result  asserts that  (\ref{K2}) holds:
\begin{thm}\label{Nevo1} Let
$I \subset K[x_1,\dots,x_n]$ and
$J \subset K[x_{n+1},\dots,x_{n+m}]$
be graded ideals.
Then we have
$
\gin( F(I,J))
=
\gin( F(\gin_{n}(I),\gin_{m}(J))).$
\end{thm}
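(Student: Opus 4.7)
The plan is to interpolate from $F(I,J)$ to $F(\gin_n(I),\gin_m(J))$ via a flat Gr\"obner deformation compatible with the block decomposition $\{x_1,\dots,x_n\}\sqcup\{x_{n+1},\dots,x_{n+m}\}$, and to verify that the fibre product structure and the Hilbert function are preserved along the deformation. I begin by reducing via a block-diagonal change of coordinates. The subgroup $H=\GL_n(K)\times\GL_m(K)\hookrightarrow \GL_{n+m}(K)$ acting separately on the two blocks of variables fixes $Q$ setwise, so $(\varphi_1,\varphi_2)(F(I,J))=F(\varphi_1(I),\varphi_2(J))$ for $(\varphi_1,\varphi_2)\in H$. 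Since $\gin$ is $\GL_{n+m}(K)$-invariant, I may take $(\varphi_1,\varphi_2)\in H$ generic and work with $I':=\varphi_1(I)$ and $J':=\varphi_2(J)$; by genericity, $\ini(I')=\gin_n(I)$ and $\ini(J')=\gin_m(J)$.

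Next I pass to a common weighted degeneration. Choose $w\in\Z^{n+m}$ so that $\ini_w(I')=\gin_n(I)$ and $\ini_w(J')=\gin_m(J)$ simultaneously; such a $w$ exists by standard Gr\"obner basis theory, since only finitely many $S$-polynomials need to be degenerated correctly. The one-parameter subgroup $\lambda_t=\mathrm{diag}(t^{w_1},\dots,t^{w_{n+m}})$ preserves $Q$, so $\lambda_t(F(I',J'))=F(\lambda_t(I'),\lambda_t(J'))$, and the flat limit of the left-hand side as $t\to 0$ is $\ini_w(F(I',J'))$. The central equality to establish is
\[
\ini_w(F(I',J'))=F(\gin_n(I),\gin_m(J)).
\]
The inclusion $\supseteq$ is automatic, since $\ini_w(I'+J'+Q)\supseteq \ini_w(I')+\ini_w(J')+Q$ and $\ini_w(Q)=Q$. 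For the reverse inclusion, I compare Hilbert functions via the short exact sequence \eqref{strfibre}: it forces $\dim_K(S/F(I,J))_d=\dim_K(S_1/I)_d+\dim_K(S_2/J)_d$ in positive degrees, so the Hilbert function of the fibre product depends only on those of $I$ and $J$. Since Hilbert functions are preserved by $\ini_w$ and by $\GL$-action, the two ideals above have identical Hilbert functions, and the inclusion must be an equality.

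To conclude, I invoke the standard fact that $\gin(L)=\gin(\ini_w(L))$ for any graded ideal $L$ and any weight $w$: upper-semicontinuity of initial ideals in the flat family $\lambda_t(L)$ yields $\gin(\ini_w(L))\supseteq\gin(L)$, and Hilbert-function comparison forces equality. Applied to $L=F(I',J')$ and chained with the block-diagonal reduction, this gives
\[
\gin(F(I,J))=\gin(F(I',J'))=\gin(\ini_w(F(I',J')))=\gin(F(\gin_n(I),\gin_m(J))).
\]
The main obstacle is the central equality above: typically $\ini_w$ of a sum strictly exceeds the sum of $\ini_w$'s, and only the rigidity of the fibre-product Hilbert function, an immediate consequence of \eqref{strfibre}, forces equality in this instance.
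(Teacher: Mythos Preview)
The ``standard fact'' $\gin(L)=\gin(\ini_w(L))$ that you invoke in the final step is false in general, and this is a genuine gap that cannot be repaired without essentially redoing the paper's argument. Indeed, your proof nowhere uses the presence of $Q$: running it verbatim with $Q$ deleted, steps (a) and (b) still go through (the tensor-product Hilbert function is equally rigid, since $S/(I+J)\cong (S_1/I)\otimes_K(S_2/J)$), and your step (c) would then yield $\gin(I+J)=\gin(\gin_n(I)+\gin_m(J))$, which is precisely the false statement~\eqref{K1}. Example~1.1 gives an explicit counterexample. To see directly that your semicontinuity claim fails, take $L=I'+J'$ with $I,J$ as in that example and $w$ as you construct it: then $\gin(L)=\gin(I+J)$ while $\gin(\ini_w(L))=\gin(\gin_2(I)+\gin_3(J))$, and the example exhibits $x_1x_4$ in the second but not the first; since the two ideals have the same Hilbert function, neither contains the other, so $\gin(\ini_w(L))\not\supseteq\gin(L)$.

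The underlying error is that upper-semicontinuity of initial ideals in a flat family does \emph{not} assert $\ini_\prec(M_0)\supseteq\ini_\prec(M_t)$; already $M_t=(tx_1+x_2)$ with $\prec$ the rlex order gives $\ini(M_0)=(x_2)\not\supseteq(x_1)=\ini(M_t)$, and composing with a generic $\varphi$ does not rescue the inclusion. The paper's proof does use your block-diagonal reduction (Proposition~\ref{mainprop} is essentially this when one of the ideals is zero), but it replaces your step~(c) by a genuinely different mechanism: via the decomposition of $(S/F(I,J))_d$ for $d>0$ coming from~\eqref{strfibre}, one shows that the numerical invariants $d_{F(I,J)}(a)$ split additively as $d_{I+\mm_m}(a)+d_{J+\mm_n}(a)$ (Proposition~\ref{mainhelper}), and these summands are separately invariant under replacing $I,J$ by their gins. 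Membership in the gin is then read off from the $d(a)$'s via Lemma~\ref{helper1}. The additive splitting in Proposition~\ref{mainhelper} is exactly where the fibre-product structure is used and where the tensor-product case fails.
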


Using the results in Section \ref{fibre} it is also possible
to compute explicitly
$\gin( F(I,J))$ in some cases. As an example we show that
$$
\gin(Q)
=
(x_ix_j: i+j \leq n+m,\ i \leq j \text{ \rm and } i \leq \min\{n,m\})
$$
and further that $\gin(Q^k)=\gin(Q)^k$ for all $k\in \N$.
In Section \ref{symmetric} we present an application related to simplicial complexes. Let $\Gamma$ be a simplicial complex on the vertex set $\{x_1,\dots,x_n\}$.
We denote the {\em symmetric algebraic shifted complex} of $\Gamma$
by $\Delta^s(\Gamma)$.
In general a shifting operation $\Delta$ in the sense of Kalai \cite{KA01} replaces a complex $\Gamma$ with
a combinatorially simpler complex $\Delta (\Gamma)$ that still
detects some combinatorial properties of $\Gamma$.
Symmetric algebraic shifting $\Delta^s$ is an example of such an operation and
can be realized by using  a kind of
polarization of the generic initial ideal with respect
to the reverse lexicographic order
of the Stanley-Reisner ideal of $\Gamma$ in the polynomial ring $K[x_1,\dots,x_n]$.
See Section \ref{symmetric} for precise
definitions and more details.
For two simplicial complexes
$\Gamma_1$ on the vertex set $\{x_1,\dots,x_n\}$ and
$\Gamma_2$ on the vertex set $\{x_{n+1},\dots,x_{n+m}\}$
we denote by
$\Gamma_1 \uplus \Gamma_2$
the {\em disjoint union}
of $\Gamma_1$ and $\Gamma_2$ on the vertex set $\{x_1,\dots,x_n,x_{n+1},\dots,x_{n+m}\}$.
In Section \ref{symmetric} we prove:

\begin{thm}\label{Nevo2} We have
$
\Delta^s(\Gamma_1 \uplus \Gamma_2)
=
\Delta^s(\Delta^s(\Gamma_1) \uplus \Delta^s(\Gamma_2)).
$
\end{thm}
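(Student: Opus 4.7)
The plan is to translate Theorem \ref{Nevo2} into a statement about generic initial ideals of fibre products, apply Theorem \ref{Nevo1}, and exploit the idempotency of the symmetric algebraic shifting operation. First I would observe, as is implicit in the introductory discussion of Kalai's conjectures, that the disjoint union corresponds at the Stanley--Reisner level to the fibre product: the minimal non-faces of $\Gamma_1 \uplus \Gamma_2$ are those of $\Gamma_1$, those of $\Gamma_2$, and all pairs $\{x_i, x_j\}$ with $i \leq n < j$, so
$$I_{\Gamma_1 \uplus \Gamma_2} = I_{\Gamma_1} + I_{\Gamma_2} + Q = F(I_{\Gamma_1}, I_{\Gamma_2}),$$
and analogously $I_{\Delta^s(\Gamma_1) \uplus \Delta^s(\Gamma_2)} = F(I_{\Delta^s(\Gamma_1)}, I_{\Delta^s(\Gamma_2)})$.

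Next I would apply Theorem \ref{Nevo1} to both fibre products to obtain
$$\gin(I_{\Gamma_1 \uplus \Gamma_2}) = \gin\bigl(F(\gin_n(I_{\Gamma_1}), \gin_m(I_{\Gamma_2}))\bigr)$$
and
$$\gin(I_{\Delta^s(\Gamma_1) \uplus \Delta^s(\Gamma_2)}) = \gin\bigl(F(\gin_n(I_{\Delta^s(\Gamma_1)}), \gin_m(I_{\Delta^s(\Gamma_2)}))\bigr).$$
To identify these two right-hand sides, I would invoke the \emph{idempotency} of $\Delta^s$, i.e.\ $\Delta^s(\Delta^s(\Gamma)) = \Delta^s(\Gamma)$, which is a standard property of symmetric algebraic shifting coming from the fact that the polarization sending a strongly stable ideal to its associated squarefree strongly stable ideal is inverse to the gin. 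At the level of ideals this translates into $\gin_n(I_{\Delta^s(\Gamma_1)}) = \gin_n(I_{\Gamma_1})$ and $\gin_m(I_{\Delta^s(\Gamma_2)}) = \gin_m(I_{\Gamma_2})$, so the two fibre products inside the outer $\gin$ coincide.

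Finally, since the Stanley--Reisner ideal of $\Delta^s(\Lambda)$ is by definition the polarization of $\gin$ of the Stanley--Reisner ideal of $\Lambda$, polarizing both sides of the resulting equality $\gin(I_{\Gamma_1 \uplus \Gamma_2}) = \gin(I_{\Delta^s(\Gamma_1) \uplus \Delta^s(\Gamma_2)})$ yields
$$I_{\Delta^s(\Gamma_1 \uplus \Gamma_2)} = I_{\Delta^s(\Delta^s(\Gamma_1) \uplus \Delta^s(\Gamma_2))},$$
which is Theorem \ref{Nevo2}. The main obstacle is Theorem \ref{Nevo1} itself; once it is available, Theorem \ref{Nevo2} follows essentially formally, the only additional inputs being the identification of the disjoint-union Stanley--Reisner ideal with a fibre product and the idempotency of $\Delta^s$ used to replace $I_{\Delta^s(\Gamma_i)}$ by $I_{\Gamma_i}$ inside the inner gins.
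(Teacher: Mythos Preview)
Your proposal is correct and follows essentially the same route as the paper: identify the Stanley--Reisner ideal of the disjoint union with the fibre product, apply Theorem~\ref{Nevo1} twice, and use the identity $\gin_n(I_{\Delta^s(\Gamma_1)})=\gin_n\bigl(\gin_n(I_{\Gamma_1})^{\sigma_n}\bigr)=\gin_n(I_{\Gamma_1})$ (which you phrase via idempotency of $\Delta^s$, and which the paper cites as \cite[Theorem~8.19]{HE01}). The only cosmetic difference is that the paper writes the argument as a single chain of equalities on the $\sigma$-side rather than first matching the two gins and then applying $(\cdot)^\sigma$.
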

This result was conjectured by Kalai in \cite{KA01} and it was proved for exterior shifting by Nevo \cite{NE03}
who noted also that his approach could be used to prove the statement
for symmetric shifting.
Indeed, our methods and strategy of proofs as presented in Section \ref{linear} and
Section \ref{fibre} follow the ideas of Nevo and can be seen as a symmetric version of his results in the exterior algebra.

%
%
%

\section{Linear Algebra and Generic initial ideals}
\label{linear}

In this section we present some tools from linear algebra used in
this paper. We follow ideas of Nevo \cite{NE03} who treated the exterior case of the results which will be presented in this section.

Let $K$  be a field with $\chara K=0$ and $S=K[x_1,\dots,x_n]$ be the
standard graded polynomial ring.
Given a monomial
$x^a \in S_d$ we
define a $K$-linear map $\tau_{x^a} \colon S \to S$
by
$$
\tau_{x^a} (x^b)
=
\begin{cases}
x^{b-a} & \text{if } a \preceq b,\\
0 & \text{else}.
\end{cases}
$$
For a polynomial $g=\sum_{a\in \N^n} \lambda_a x^a \in S$
we extend this definition
and consider the $K$-linear map $\tau_{g}= \sum_{a \in \N^n}\lambda_a \tau_{x^a}$.
In the following we will use the rule
$$
\tau_{g h} (l)=\tau_g(\tau_h(l)) \text{ for homogeneous } g,h,l \in S
$$
which is easily verified.

Given a graded ideal $I \subset S$ and the reverse lexicographic order $<_{\rlex}$
induced by $x_1>\dots>x_n$ on $S$,
the algebra $S/I$ has a $K$-basis $\Bc(S/I)$
consisting of those monomial $x^a$ such that $x^a \not\in \ini(I)$
where $\ini(I)$ always denotes the initial ideal of $I$ with respect
to $\rlex$.

Induced by the inclusion of $\Bc(S/I)$ into $\Bc(S)$,
we may consider $S/I$ as a graded $K$-vector subspace of $S$.
(So we are not using the quotient structure of $S/I$ as an $S$-module in the following
if not otherwise stated.)
Given a homogeneous element $g \in S_d$ the map
$\tau_g$ can be restricted to $S/I$. The main object we are studying
are the various kernels
$$
\Kern_{S/I} (\tau_g)_e =
\Kern(\tau_g\colon (S/I)_e \to (S/I)_{e-d}).
$$

We fix now a second generic $K$-basis
$f_j=\sum_{j=1}^n a_{ij}x_i$ of $S_1$ where $A=(a_{ij}) \in K^{n\times n}$ is invertible
and study the map
$\tau_{f^a}$ where $f^a$ is a monomial in the new $K$-basis $f_1,\dots,f_n$.
Generic means here that with the
automorphism
$\varphi\colon S \to S, x_i \mapsto f_i$
we can compute the generic initial ideal
$\gin(I)$
with respect to $\rlex$ induced by $x_1>\dots>x_n$
of a given graded ideal $I \subset S$
via
$\gin(I)=\ini(\varphi^{-1}(I))$.

We need the following alternative way to decide whether a monomial
belongs to a generic initial ideal or not. This criterion is well-known
to specialists but we include it for the sake of completeness.
For a (homogeneous) element $g \in S$
we denote by $\overline{g}$ the residue class in $S/I$
(induced by the quotient structure of $S/I$).

\begin{lem}
\label{computegin}
Let $I \subset S$ be a graded ideal and $x^a \in S$ be a monomial.
Then
$$
x^a \in \gin(I) \Longleftrightarrow
\overline{f^a} \in \spanK_K\{\overline{f^b} : x^b<_{\rlex} x^a,\ |b|=|a|   \} \subset S/I.
$$
\end{lem}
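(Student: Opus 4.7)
The plan is to unwind the definition $\gin(I)=\ini(\varphi^{-1}(I))$ and translate the condition ``$x^a$ is a leading monomial of some element of $\varphi^{-1}(I)$'' into a condition on $f^a$ modulo $I$ via the graded $K$-algebra isomorphism $\varphi\colon x_i\mapsto f_i$. The whole argument is essentially the statement that $\varphi$ is bijective on degree pieces and that taking leading monomials commutes with scaling, so the work is purely bookkeeping.

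First I would record the following standard reformulation: for any graded ideal $J\subset S$ and any monomial $x^a$ of degree $d$, one has $x^a\in\ini(J)$ if and only if there exists a homogeneous $g\in J_d$ of the form $g=x^a-\sum_{x^b<_{\rlex}x^a,\,|b|=d}c_b x^b$, or equivalently,
\[
x^a\in J+\spanK_K\bigl\{x^b:\, x^b<_{\rlex}x^a,\ |b|=|a|\bigr\}.
\]
This is immediate from the fact that $J$ is graded and the leading coefficient can be normalised. I would apply this to $J=\varphi^{-1}(I)$, so that $x^a\in\gin(I)$ iff
\[
x^a\in\varphi^{-1}(I)+\spanK_K\bigl\{x^b:\, x^b<_{\rlex}x^a,\ |b|=|a|\bigr\}.
\]

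Next I would apply the $K$-linear isomorphism $\varphi$ to both sides. Since $\varphi$ is graded and $\varphi(x^b)=f^b$, the previous condition becomes
\[
f^a\in I+\spanK_K\bigl\{f^b:\, x^b<_{\rlex}x^a,\ |b|=|a|\bigr\},
\]
which is exactly the statement $\overline{f^a}\in\spanK_K\{\overline{f^b}:x^b<_{\rlex}x^a,\ |b|=|a|\}$ in $S/I$. Both implications are read off from the same chain of equivalences once one uses that $\varphi$ is a bijection (so the containment is preserved), so each direction follows by simply undoing or applying $\varphi$.

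There is essentially no obstacle here: the lemma is a translation between two viewpoints on the same condition, and the only point that needs a line of justification is the initial normalisation of the leading coefficient, which uses that $\varphi^{-1}(I)$ is a graded ideal and that we may take the homogeneous component of $g$ in degree $|a|$. I would write the proof as a short chain of iff statements rather than two separate implications.
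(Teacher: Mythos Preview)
Your proposal is correct and follows essentially the same route as the paper: both proofs write the chain of equivalences $x^a\in\ini(\varphi^{-1}(I))\Leftrightarrow x^a-\sum_{x^b<_{\rlex}x^a}\lambda_b x^b\in\varphi^{-1}(I)\Leftrightarrow \overline{f^a}\in\spanK_K\{\overline{f^b}\}\subset S/I$, with the last step obtained by applying the isomorphism $\varphi$. The only cosmetic difference is that the paper phrases the intermediate step as a residue-class condition in $S/\varphi^{-1}(I)$ while you phrase it as a sum $\varphi^{-1}(I)+\spanK_K\{\dots\}$, but these are the same thing.
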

\begin{proof}
We have that
\begin{eqnarray*}
&&
x^a \in \gin(I)=\ini(\varphi^{-1}(I))\\
&\Leftrightarrow&
x^a-\sum_{x^b<_{\rlex} x^a,\ |b|=|a|} \lambda_b x^b \in  \varphi^{-1}(I)
\text{ for some } \lambda_b \in K\\
&\Leftrightarrow&
\overline{x^a} \in \spanK_K \{ \overline{x^b}: x^b <_{\rlex} x^a,\ |b|=|a| \}
\subset S/\varphi^{-1}(I)\\
&\Leftrightarrow&
\overline{f^a} \in \spanK_K \{\overline{f^b}: x^b<_{\rlex}x^a,\ |b|=|a|\} \subset S/I.
\end{eqnarray*}
This concludes the proof.
\end{proof}

Let $\Bc(S/I)=\{x^c: x^c \not\in \ini(I)\}$ be the $K$-basis
of
$S/I$ as discussed above. Similarly
$\Bc(S/\gin(I))$ is the natural
$K$-basis of $S/\gin(I)$, \ie those monomials $x^c$ with $x^c \not\in \gin(I)$.
We consider these monomials either as monomials in $S$
or in the corresponding residue class ring of $S$.
The generic initial ideal
$\gin(I)$
can also be characterized via the
kernels $\Kern_{S/I} (\tau_{f^a})_e$. More precisely, we have:

\begin{prop}
\label{kern1}
Let $d>0$ be a positive integer,
$x^a \in S_d$ be a monomial and $I \subset S$ be a graded ideal.
Then
$$
\dim_K \bigcap_{x^b<_{\rlex}x^a,\  |b|=d} \Kern_{S/I} (\tau_{f^b})_d=|\{x^c \in \Bc(S/\gin(I)) : |c|=d,\ x^a \leq_{\rlex} x^c\}|.
$$
In particular,
$x^a \in \Bc(S/\gin(I))$ if and only if
$$
\dim_K \bigcap_{x^b<_{\rlex}x^a,\  |b|=d} \Kern_{S/I} (\tau_{f^b})_d>\dim_K \bigcap_{x^b\leq_{\rlex} x^a,\  |b|=d} \Kern_{S/I} (\tau_{f^b})_d.
$$
\end{prop}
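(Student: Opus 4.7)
First, I would derive the ``in particular'' clause from the main dimension equality by applying the latter at $x^a$ and at the immediate rlex-successor $x^{a'}$ of $x^a$: the two counts differ by $1$ precisely when $x^a\in\Bc(S/\gin(I))_d$, so the kernel dimensions drop by $1$ there as well. The main task is thus the dimension formula.

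The key tool will be the bilinear pairing $\langle g,h\rangle:=\tau_g(h)$ on $S_d\times S_d$. Because $\tau_{x^a}(x^b)=\delta_{a,b}$ whenever $|a|=|b|=d$, this pairing is nondegenerate and symmetric, with the degree-$d$ monomials as an orthonormal basis; in particular, the orthogonal complement in $S_d$ of $(S/I)_d=\spanK\Bc(S/I)_d$ is exactly $\ini(I)_d$. Set $W:=\spanK\{f^b:x^b<_{\rlex}x^a,\ |b|=d\}$ of dimension $N_{<a}$, and $V_a:=\spanK\{\overline{f^b}:x^b<_{\rlex}x^a,\ |b|=d\}\subset(S/I)_d$. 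An iterated application of Lemma~\ref{computegin} yields $\dim V_a=|\{x^c\in\Bc(S/\gin(I))_d:x^c<_{\rlex}x^a\}|$, so the right-hand side of the stated identity equals $\dim(S/I)_d-\dim V_a$. By rank--nullity applied to
$$\Phi\colon(S/I)_d\to K^{N_{<a}},\quad h\mapsto\bigl(\tau_{f^b}(h)\bigr)_{b},$$
whose kernel is the intersection in question, it suffices to show that the image of $\Phi$ has dimension $\dim V_a$.

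I would then express both dimensions as codimensions inside $W$. The surjection $W\twoheadrightarrow V_a$, $g\mapsto\overline{g}$, has kernel $W\cap I_d$, so $\dim V_a=N_{<a}-\dim(W\cap I_d)$. The transpose $\Phi^T\colon K^{N_{<a}}\to(S/I)_d^*$, $(\lambda_b)\mapsto\tau_{\sum\lambda_b f^b}|_{(S/I)_d}$, factors through the surjection $S_d\twoheadrightarrow(S/I)_d^*$, $g\mapsto\tau_g|_{(S/I)_d}$, whose kernel is $\ini(I)_d$ by the pairing observation above; hence the image of $\Phi$ (equal in dimension to that of $\Phi^T$) has dimension $N_{<a}-\dim(W\cap\ini(I)_d)$. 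The identity therefore reduces to the key equality $\dim(W\cap I_d)=\dim(W\cap\ini(I)_d)$.

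The hard part will be this last equality. Since $W=\varphi(U)$ for $U:=\spanK\{x^b:x^b<_{\rlex}x^a,\ |b|=d\}$ fixed and $\varphi\in\GL_n(K)$ generic, $W$ is a generic subspace of $S_d$ of dimension $N_{<a}$; by upper semi-continuity of intersection dimension, a generic subspace meets any fixed $V\subset S_d$ in the minimal possible dimension $\max(0,N_{<a}+\dim V-\dim S_d)$. Because $\dim I_d=\dim\ini(I)_d$, the two intersection dimensions agree, and the two dense open conditions on $\varphi$ (one for $V=I_d$ and one for $V=\ini(I)_d$) can be imposed simultaneously on a common dense open subset of $\GL_n(K)$.
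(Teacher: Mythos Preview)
Your reduction is sound and in fact makes explicit a point that the paper's proof passes over quickly: after setting up the matrix $M_a$ and invoking rank--nullity, everything comes down to showing that the rank of $\Phi$ equals $\dim V_a$, which (as you correctly compute) is the equality
\[
\dim(W\cap I_d)=\dim\bigl(W\cap \ini(I)_d\bigr),\qquad W=\varphi(U),\ U=\spanK\{x^b:x^b<_{\rlex}x^a\}.
\]
The paper reaches the same conclusion by asserting directly that $\rank M_a=|\{x^c\in\Bc(S/\gin(I))_d:x^c<_{\rlex}x^a\}|$ ``by Lemma~\ref{computegin}.'' Your version unpacks this step more carefully.

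The gap is in your last paragraph. You argue that $W=\varphi(U)$ is a \emph{generic} subspace of $S_d$ and hence meets any fixed $V$ in the expected dimension $\max(0,\dim W+\dim V-\dim S_d)$. But for $d>1$ the group $\GL_n(K)$ acts on $S_d$ through the $d$-th symmetric power, so it does \emph{not} act transitively on the Grassmannian of $N_{<a}$-planes in $S_d$; the locus $\{\varphi(U):\varphi\in\GL_n\}$ is a proper subvariety, and $\varphi(U)$ can meet a fixed $V$ in strictly more than the expected dimension no matter how generic $\varphi$ is. A concrete failure: take $n=3$, $d=2$, $x^a=x_2^2$, so $U=\spanK\{x_1x_3,x_2x_3,x_3^2\}=x_3S_1$ and $W=\varphi(U)=f_3S_1$. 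With $V=x_1S_1$ (which is $I_2$ for $I=x_1\mm$) one has $W\cap V=K\!\cdot\! f_3x_1$, of dimension $1$, while your formula predicts $\max(0,3+3-6)=0$. Here $I$ is monomial, so $I_d=\ini(I)_d$ and the key equality holds trivially---but your argument for it does not. In particular, the semicontinuity step is fine; what fails is the identification of the generic value with the ``expected'' one, and therefore the conclusion that the generic values for $V=I_d$ and $V=\ini(I)_d$ agree merely because $\dim I_d=\dim\ini(I)_d$.
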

\begin{proof}
Let $g \in (S/I)_d$ be a homogenous element expressed as a linear combination of the elements of $\Bc(S/I)$,
\ie
$$
g=\sum_{x^c \in \Bc(S/I)}  \lambda_c x^c
\text{ for some } \lambda_c \in K.
$$
Write $f^b$ for $b \in \N^n$ with $|b|=d$ as
$$
f^b= \sum_{a' \in \N^n,\ |a'|=d} A_{ba'} x^{a'}
$$
where $A_{ba'}$ is a polynomial in the $a_{ij}$ of the base change matrix $A=(a_{ij})$.
We get that
$$
\tau_{f^b}(g)=\sum_{x^c \in \Bc(S/I)}  A_{bc}\lambda_c.
$$
Hence
$$
\dim_K \bigcap_{x^b<_{\rlex}x^a,\  |b|=d} \Kern_{S/I} (\tau_{f^b})_d
$$
equals the dimension of the
solution space $\Sc$
of the linear systems of equations
$M_a z=0$ with the matrix
$M_a=(A_{bc})$
where
$x^b<_{\rlex}x^a$, $x^c \in \Bc(S/I)$ and $|b|=|c|=d$.
It follows from Lemma \ref{computegin}
that
$$
\rank M_a =
|\{x^c \in \Bc(S/\gin(I)) : |c|=d,\ x^c<_{\rlex}x^a  \}|.
$$
We compute
\begin{eqnarray*}
\dim_K \bigcap_{x^b<_{\rlex}x^a,\  |b|=d} \Kern_{S/I} (\tau_{f^b})_d
&=&
\dim_K \Kern(M_a\colon(S/I)_d \to (S/I)_d)\\
&=&
\dim_K (S/I)_d - \rank M_a\\
&=&
\dim_K (S/\gin(I))_d - \rank M_a\\
&=&
\{x^c \in \Bc(S/\gin(I)) : |c|=d,\ x^a \leq_{\rlex} x^c  \}
\end{eqnarray*}
and this is what we wanted.
\end{proof}

For later applications we give a second
characterization  of the membership in the generic initial ideals
which is similar to Proposition \ref{kern1}.
At first we need the following Lemma:

\begin{lem}
\label{helperker}
Let $a \in \N^n$ and $d=|a|<e$.
Then
$$
\Kern_{S/I} (\tau_{f^a})_e =
\bigcap_{j=1}^n \Kern_{S/I} (\tau_{f^{a+\varepsilon_j}})_e
$$
\end{lem}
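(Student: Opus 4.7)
The plan is to prove the two inclusions separately. The inclusion $\subseteq$ is immediate from the composition rule $\tau_{gh}(l) = \tau_g(\tau_h(l))$ recorded earlier in the section: if $\tau_{f^a}(l) = 0$, then $\tau_{f^{a+\varepsilon_j}}(l) = \tau_{f_j}(\tau_{f^a}(l)) = 0$ for every $j$.

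For the inclusion $\supseteq$, suppose $l \in (S/I)_e$ is annihilated by every $\tau_{f^{a+\varepsilon_j}}$. Set $h = \tau_{f^a}(l) \in (S/I)_{e-d}$. The composition rule gives $\tau_{f_j}(h) = 0$ for $j = 1, \dots, n$, and since $f_1, \dots, f_n$ is a $K$-basis of $S_1$, $K$-linearity of $\tau$ in its subscript then yields $\tau_{x_i}(h) = 0$ for every $i = 1, \dots, n$. (I would also note in passing that each $\tau_{x_i}$ really restricts to $S/I \subset S$: if $x^c \in \Bc(S/I)$ and $c_i \geq 1$, then $x^{c-\varepsilon_i} \in \Bc(S/I)$ as well, for otherwise the ideal $\ini(I)$ would contain $x^c = x_i \cdot x^{c-\varepsilon_i}$.)

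It remains to show $h = 0$, and this is the one point where the hypothesis $d < e$ enters. Expand $h = \sum_c \mu_c x^c$ with $x^c \in \Bc(S/I)_{e-d}$. For any $c$ in the support of $h$ we have $|c| = e-d > 0$, so some $c_i \geq 1$; in the sum $\tau_{x_i}(h) = \sum_{c':c'_i \geq 1} \mu_{c'} x^{c' - \varepsilon_i}$ the coefficient of $x^{c-\varepsilon_i}$ is then exactly $\mu_c$ (the map $c' \mapsto c' - \varepsilon_i$ being injective on the set of $c'$ with $c'_i \geq 1$). Hence $\tau_{x_i}(h) = 0$ forces $\mu_c = 0$ for every $c$ in the support, and thus $h = 0$.

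I do not anticipate any serious obstacle: the proof is essentially a bookkeeping application of the composition rule combined with the elementary fact that the contraction operators $\tau_{x_1}, \dots, \tau_{x_n}$ jointly detect every nonzero homogeneous element of positive degree. The only mild subtlety worth stating explicitly is the compatibility of $\tau$ with the $K$-linear embedding $S/I = \spanK_K \Bc(S/I) \hookrightarrow S$, which rests on the reverse-lexicographic nature of the basis $\Bc(S/I)$ and the ideal property of $\ini(I)$.
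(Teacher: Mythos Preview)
Your proof is correct and follows essentially the same approach as the paper: the inclusion $\subseteq$ is identical, and for $\supseteq$ both arguments reduce to showing that a nonzero element $h\in(S/I)_{e-d}$ of positive degree cannot lie in every $\Kern\tau_{f_j}$. The only cosmetic difference is that the paper stays in the $f$-basis and factors a nonvanishing $\tau_{f^c}(h)$ through some $\tau_{f_i}$, whereas you pass to the $x$-basis and read off coefficients directly; your parenthetical remark about rlex is unnecessary, since the closure of $\Bc(S/I)$ under $\tau_{x_i}$ uses only that $\ini(I)$ is an ideal.
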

\begin{proof}
Let $g \in \Kern_{S/I} (\tau_{f^a})_e$.
Then for $i=1,\dots,n$ we have that
$
\tau_{f^{a+\varepsilon_j}}(g) = \tau_{f_j} (\tau_{f^{a}}(g)) =0.
$
Thus
$g \in
\cap_{i=1}^n \Kern_{S/I} (\tau_{f^{a+\varepsilon_j}})_e
$.

On the other hand let
$g \in
(S/I)_e
\setminus
\Kern_{S/I} (\tau_{f^a})_e$.
Then
$0\neq \tau_{f^a}(g)\in (S/I)_{e-d}$ is a nonzero element of degree $>0$.
Since the set $\{f^c: c \in \N^n,\ |c|=e-d\}$ is a $K$-basis for $S_{e-d}$,
there must exist an $f^c$ such that $\tau_{f^c}(\tau_{f^a}(g))\neq 0$.
Choose an $i \in \N$ with $c_i>0$. Then
$$
0\neq\tau_{f^c}(\tau_{f^a}(g))= \tau_{f^{a+c}}(g)=\tau_{f^{c-\epsilon_i}}(\tau_{f^{a+\epsilon_i}}(g))\neq 0
$$
and hence $\tau_{f^{a+\epsilon_i}}(g) \neq 0$.
Thus $g \notin \cap_{i=1}^n \Kern_{S/I} (\tau_{f^{a+\varepsilon_i}})_e$.
This concludes the proof.
\end{proof}

For a monomial $1\neq x^b \in S$
with $b \in \N^{n}$
we set
$\min(x^b)=\min(b)=\min\{i : b_i>0\}$.
Furthermore we define $\min(1)=1$.
Given a monomial $x^a \in S_d$ we denote
by
$$
\Sh(a) = \{x^b \in S_{d+1}:  x^b/x_{\min(b)}=x^a\}
$$
the {\em shadow} of $x^a$ by multiplying with $x_i$ such that $i \leq \min(a)$.
Let
$$
\maxSh(a)=\max_{<_{\rlex}} \Sh(a)
\text{ and }\minSh(a)=\min_{<_{\rlex}} \Sh(a)
$$
be the maximal and minimal element of $\Sh(a)$
with respect to $<_{\rlex}$ on $S$.
Now we partition the generic initial ideal using the set $\Sh(a)$.

\begin{prop}
\label{ker2}
Let $d> 0$, $x^a \in S_d$ be a monomial and $I \subset S$ be a graded ideal.
Then
\begin{eqnarray*}
&&|
\Sh(a) \cap \Bc(S/\gin(I))|\\
&=&
\dim_K \bigcap_{x^b<_{\rlex}x^a,\ |b|=d} \Kern_{S/I} (\tau_{f^b})_{d+1}-
\dim_K \bigcap_{x^b\leq_{\rlex} x^a,\ |b|=d} \Kern_{S/I} (\tau_{f^b})_{d+1}
\end{eqnarray*}
\end{prop}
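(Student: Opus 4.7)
The plan is to reduce to Proposition \ref{kern1} applied in degree $d+1$. (Its proof works verbatim with $d$ replaced by any $e$, since the key identity $\tau_{f^b}(g)=\sum A_{bc}\lambda_c$ only uses $|b|=\deg g$.) First, I apply Lemma \ref{helperker} with $e=d+1$ to shift the indexing of each kernel to degree $d+1$:
$$
\Kern_{S/I}(\tau_{f^b})_{d+1} = \bigcap_{j=1}^{n} \Kern_{S/I}(\tau_{f^{b+\varepsilon_j}})_{d+1} \quad \text{for } |b|=d.
$$
Thus the two intersections on the right-hand side of the claim equal, respectively, $V_i := \bigcap_{x^{b'} \in T_i} \Kern_{S/I}(\tau_{f^{b'}})_{d+1}$, where
$$
T_1 = \{x^b x_j : x^b <_{\rlex} x^a,\ |b|=d,\ 1 \leq j \leq n\}, \qquad T_2 = \{x^b x_j : x^b \leq_{\rlex} x^a,\ |b|=d,\ 1 \leq j \leq n\}.
$$

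Next I identify $T_1$ and $T_2$ as initial segments of $(S_{d+1},<_{\rlex})$. A monomial $x^{b'} \in S_{d+1}$ lies in $T_1$ iff some quotient $x^{b'}/x_j$ (with $x_j \mid x^{b'}$) is $<_{\rlex} x^a$; since the $\rlex$-smallest such quotient is $x^{b'}/x_{\min(b')}$, this is equivalent to $x^{b'}/x_{\min(b')} <_{\rlex} x^a$. Writing $x^{a''} = x^{b'}/x_{\min(b')}$, so that $x^{b'} \in \Sh(a'')$, the condition becomes $x^{a''} <_{\rlex} x^a$. Using the standard compatibility of the shadow partition with $\rlex$---namely, $x^{a''} <_{\rlex} x^{a'}$ in $S_d$ implies $\maxSh(a'') <_{\rlex} \minSh(a')$---one concludes
$$
T_1 = \{x^{b'} \in S_{d+1} : x^{b'} <_{\rlex} \minSh(a)\}, \qquad T_2 = \{x^{b'} \in S_{d+1} : x^{b'} \leq_{\rlex} \maxSh(a)\}.
$$

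Finally, I apply Proposition \ref{kern1} (and its evident ``$\leq$''-variant, obtained by replacing $x^a$ by its immediate $\rlex$-successor) in degree $d+1$. Taking $x^{a'}=\minSh(a)$ yields
$$
\dim_K V_1 = |\{x^c \in \Bc(S/\gin(I)) : |c|=d+1,\ \minSh(a) \leq_{\rlex} x^c\}|,
$$
and taking $x^{a'}=\maxSh(a)$ in the $\leq$-variant yields
$$
\dim_K V_2 = |\{x^c \in \Bc(S/\gin(I)) : |c|=d+1,\ \maxSh(a) <_{\rlex} x^c\}|.
$$
Subtracting, the difference counts $x^c \in \Bc(S/\gin(I))$ with $\minSh(a) \leq_{\rlex} x^c \leq_{\rlex} \maxSh(a)$, which by the shadow-initial-segment property is exactly $|\Sh(a) \cap \Bc(S/\gin(I))|$.

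The main obstacle is the second step: verifying the shadow--$\rlex$ compatibility $\maxSh(a'') <_{\rlex} \minSh(a')$ whenever $x^{a''} <_{\rlex} x^{a'}$ in $S_d$. This is a classical compression property of the reverse lexicographic order (underlying, e.g., Macaulay's theorem), provable by case analysis on the largest-index coordinate where $x^{a''}$ and $x^{a'}$ differ, combined with the constraint $i \leq \min(\cdot)$ in the definition of $\Sh$.
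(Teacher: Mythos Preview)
Your proposal is correct and follows essentially the same route as the paper: apply Lemma~\ref{helperker} to raise the index set to degree $d+1$, identify the resulting index sets as the $\rlex$-initial segments below $\minSh(a)$ (respectively at or below $\maxSh(a)$), then apply Proposition~\ref{kern1} in degree $d+1$ and subtract. The paper simply asserts the set identity $\{x^{b'} : x^{b'}<_{\rlex} \minSh(a)\} = \{x^{b}x_i : x^{b}<_{\rlex} x^a,\ 1\leq i \leq n\}$ without justification, whereas you spell out the underlying ``shadows form consecutive $\rlex$-intervals'' property; your added detail is sound (indeed, the case $k\leq \min(a')$ cannot occur, since $a''_k>a'_k$ together with $|a''|=|a'|$ forces $\sum_{j<k}a'_j>0$, so $\min(a')<k$).
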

\begin{proof}
It follows from Lemma \ref{helperker} that
$$
\dim_K \bigcap_{x^b<_{\rlex}x^a,\ |b|=d} \Kern_{S/I} (\tau_{f^b})_{d+1}=
\dim_K \bigcap_{x^b<_{\rlex}x^a,\ |b|=d}\bigcap_{j=1}^n \Kern_{S/I} (\tau_{f^{b+\epsilon_j}})_{d+1}.
$$
Since
$\{x^{b'} \colon x^{b'}<_{\rlex} \minSh(a)\} =
\{x^{b}x_i \colon x^{b}<_{\rlex} x^a,\ 1\leq i \leq n\}$,
the latter dimension equals
$$
\dim_K \bigcap_{x^{b'}<_{\rlex}\minSh(a),\ |b'|=d+1} \Kern_{S/I} (\tau_{f^{b'}})_{d+1}
$$
which is by Proposition
\ref{kern1}
equal to
$$
|\{x^c \in \Bc(S/\gin(I)) : |c|=d+1,\ \minSh(a) \leq_{\rlex} x^c\}|.
$$
Analogously we compute
$$
\dim_K \bigcap_{x^b\leq_{\rlex}x^a,\ |b|=d} \Kern_{S/I} (\tau_{f^b})_{d+1}
$$
$$
=|\{x^c \in \Bc(S/\gin(I)) : |c|=d+1,\ \maxSh(a) <_{\rlex} x^c\}|.
$$
Taking the difference in question gives rise to the desired formula.
\end{proof}

Given a graded ideal $I \subset S$ and a monomial $x^a \in S$
we define
\begin{eqnarray*}
d_I(a)
&=&
|\Sh(x^a/x_{\min(a)}) \cap \Bc(S/\gin(I))|\\
&=&
|\{x^{b} \in S :x^{b} \in \Bc(S/\gin(I)),\ |b|=|a| \text{ and } x^{b}/x_{\min(b)}=x^{a}/x_{\min(a)}\}|.
\end{eqnarray*}
Using $d_I(a)$ we can decide whether $x^a$ belongs to $\gin(I)$ or not.
Recall that a monomial ideal $I \subseteq S$ is called {\em strongly stable},
if $x_j x^a/x_i \in I$ for every $x^a \in I$ such that $x_i |x^a$ and $j \leq i$.
Since $\chara K=0$ it is well-known that $\gin(I)$ is strongly stable
(see, e.g., Eisenbud \cite{EI95}).

\begin{lem}
\label{helper1}
Let $I \subset S$ be a graded ideal and $x^a \in S_d$ be a monomial with $d>1$.
Then
$$
x^a \not\in \gin(I)
\Longleftrightarrow
\min(x^a/x_{\min(a)}) - \min(x^a) + 1 \leq d_I(a).
$$
\end{lem}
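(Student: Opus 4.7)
Set $m = \min(x^a)$, $x^{a'} = x^a/x_m$ (which makes sense since $d>1$), and $m' = \min(x^{a'})$; observe that $m \leq m'$. The plan is to translate $d_I(a)$ into a count of elements of $\Sh(a')$ lying above and below $x^a$ in rlex, then apply the strong stability of $\gin(I)$ (valid since $\chara K = 0$) to control membership in $\gin(I)$.

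First I would unwind the shadow: the definition gives $\Sh(a') = \{x_i \cdot x^{a'} : 1 \leq i \leq m'\}$, a set of $m'$ monomials of degree $d$ that contains $x^a = x_m x^{a'}$. A direct comparison of exponent vectors shows that for $1 \leq i, j \leq m'$ we have $x_i x^{a'} >_{\rlex} x_j x^{a'}$ iff $i < j$. Therefore the elements of $\Sh(a')$ that are $\geq_{\rlex} x^a$ are exactly $\{x_i x^{a'} : 1 \leq i \leq m\}$ (there are $m$ of them, counting $x^a$), while those strictly $<_{\rlex} x^a$ are $\{x_i x^{a'} : m < i \leq m'\}$ (there are $m'-m$ of them).

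Next I would split into two implications using strong stability. If $x^a \in \gin(I)$, then since $x_m \mid x^a$, strong stability yields $x_j x^a / x_m = x_j x^{a'} \in \gin(I)$ for every $j \leq m$; so all $m$ monomials $\geq_{\rlex} x^a$ belong to $\gin(I)$ and at most $m'-m$ elements of $\Sh(a')$ lie in $\Bc(S/\gin(I))$, giving $d_I(a) \leq m'-m$. Conversely, suppose $x^a \notin \gin(I)$. For any $i$ with $m < i \leq m'$ one has $x_i \mid x_i x^{a'}$ (either through the explicit factor, or, when $i = m'$, through the factor $x_{m'}$ already in $x^{a'}$); if we had $x_i x^{a'} \in \gin(I)$, then strong stability with $j = m \leq i$ would force $x_m(x_i x^{a'})/x_i = x_m x^{a'} = x^a \in \gin(I)$, a contradiction. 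Hence all $m'-m$ monomials strictly below $x^a$ in $\Sh(a')$, together with $x^a$ itself, lie in $\Bc(S/\gin(I))$, so $d_I(a) \geq m'-m+1$.

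Combining these two implications gives $x^a \notin \gin(I) \iff d_I(a) \geq m' - m + 1$, which is precisely the claimed inequality $\min(x^a/x_{\min(a)}) - \min(x^a) + 1 \leq d_I(a)$. The only real care needed is in the rlex comparison within $\Sh(a')$ and in checking that strong stability can always be applied with divisor $x_i$ in the argument above; both are straightforward once the cases $i < m'$ and $i = m'$ are separated.
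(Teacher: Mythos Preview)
Your proof is correct and follows essentially the same approach as the paper: both arguments set $x^{a'}=x^a/x_{\min(a)}$, identify $\Sh(a')=\{x_i x^{a'}:1\leq i\leq m'\}$, and use the strong stability of $\gin(I)$ to show that membership of $x^a$ in $\gin(I)$ forces the $m$ monomials $x_jx^{a'}$ with $j\leq m$ into $\gin(I)$, while non-membership forces the $m'-m+1$ monomials with $m\leq j\leq m'$ out. Your write-up is in fact more explicit than the paper's (which dispatches the converse with ``Similarly''); the rlex discussion and the case split $i<m'$ versus $i=m'$ are harmless but unnecessary, since $x_i\mid x_i x^{a'}$ holds trivially.
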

\begin{proof}
Let $x^{a'}=x^a/x_{\min(a)}$.
Then clearly $\min(a') \geq \min(a)$ and $x^a \in \Sh(x^{a'})$.
If $x^a \not\in \gin(I)$, then $x^b\not\in \gin(I)$ for $x^b=x_jx^{a'}$
and $\min(a) \leq j \leq \min(a')$ because $\gin(I)$ is strongly stable.
Hence
$\min(x^a/x_{\min(a)}) - \min(x^a) + 1 \leq d_I(a)$ because the considered monomials $x^b$
are elements of  $\Sh(x^{a'})$. Similarly we see that
$\min(x^a/x_{\min(a)}) - \min(x^a) + 1 \leq d_I(a)$ implies that
$x^a \not\in \gin(I)$.
\end{proof}

Since $\gin(\gin(I))=\gin(I)$ (see, e.g., \cite{CO04} for a proof)
it follows right from the definition that:

\begin{lem}
\label{easyhelper}
Let $I \subset S$ be a graded ideal and $x^a \in S_d$ be a monomial with $d>1$.
Then
$$
d_I(a)=d_{\gin(I)}(a).
$$
\end{lem}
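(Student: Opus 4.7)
The plan is essentially to unwind the definitions. Recall that for any graded ideal $J \subset S$, the quantity $d_J(a)$ is defined purely in terms of the monomial $K$-basis $\Bc(S/\gin(J))$, namely
$$d_J(a) = |\Sh(x^a/x_{\min(a)}) \cap \Bc(S/\gin(J))|.$$
So the claim is just the assertion that replacing $J = I$ with $J = \gin(I)$ does not change the basis that appears on the right-hand side.

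The only input needed is the idempotency of gin, $\gin(\gin(I)) = \gin(I)$, which is stated in the paper just before the lemma (with a reference for a proof). Applying this with $J = \gin(I)$ gives $\Bc(S/\gin(\gin(I))) = \Bc(S/\gin(I))$, whence
$$d_{\gin(I)}(a) = |\Sh(x^a/x_{\min(a)}) \cap \Bc(S/\gin(\gin(I)))| = |\Sh(x^a/x_{\min(a)}) \cap \Bc(S/\gin(I))| = d_I(a).$$

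There is no real obstacle here; the hypothesis $d > 1$ is inherited from how $d_I(a)$ is used in the preceding lemma but plays no role in the equality itself. The proof is a one-line consequence of the definition together with $\gin(\gin(I)) = \gin(I)$.
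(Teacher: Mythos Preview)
Your proof is correct and is exactly the argument the paper intends: the authors simply remark that the equality follows ``right from the definition'' together with $\gin(\gin(I))=\gin(I)$, which is precisely what you unwind.
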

%
%
%
\section{Fibre Products}
\label{fibre}
We recall some notation.
Let $\gin_{n}(I)$ be the generic initial ideal of a graded ideal
$I \subset K[x_1,\dots,x_n]$ with respect to rlex induced by $x_1>\dots>x_n$,
let $\gin_{m}(J)$ be the generic initial ideal of a graded ideal
$J \subset K[x_{n+1},\dots,x_{n+m}]$ with respect to rlex induced by $x_{n+1}>\dots>x_{n+m}$,
and let $\gin(L)$ be the generic initial ideal of a graded ideal
$L \subset S=K[x_1,\dots,x_{n+m}]$ with respect to rlex induced by
$x_1>\dots>x_n>x_{n+1}>\dots>x_{n+m}$.
Furthermore we define the ideals $\mm_n=(x_1,\dots,x_n) \subset K[x_1,\dots,x_n]$,
$\mm_m=(x_{n+1},\dots,x_{n+m}) \subset K[x_{n+1},\dots,x_{n+m}])$ and  $Q=\mm_n\mm_m \subset S$.
We prove first a special version of our main result:
\begin{prop}
\label{mainprop}
Let
$I \subset K[x_1,\dots,x_n]$ and
$J \subset K[x_{n+1},\dots,x_{n+m}]$
be graded ideals.
Then we have the following formulas for ideals in $S$:
\begin{enumerate}
\item
$
\gin( I+ \mm_m)=
\gin( \gin_{n}(I)+ \mm_m).
$
\item
$
\gin( J + \mm_n)=
\gin( \gin_{m}(J)+ \mm_n).
$
\end{enumerate}
\end{prop}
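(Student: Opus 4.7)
I prove (i); part (ii) follows by the symmetric argument, exchanging the roles of $(I,n)$ and $(J,m)$. The key is the explicit formula
\begin{equation*}
\gin(I + \mm_m) = (x_1,\dots,x_m) + \sigma(\gin_n(I)),
\end{equation*}
where $\sigma\colon K[x_1,\dots,x_n] \to K[x_{m+1},\dots,x_{n+m}] \subset S$ denotes the renaming $x_i \mapsto x_{m+i}$. Since the right-hand side depends on $I$ only through $\gin_n(I)$, and $\gin_n$ is idempotent, applying this formula to both $I$ and $\gin_n(I)$ yields the same ideal, which proves~(i).

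Fix a generic $\varphi \in \GL_{n+m}(K)$ with $\varphi(x_i) = f_i$, so that $\gin(I+\mm_m) = \ini(\varphi(I) + (f_{n+1},\dots,f_{n+m}))$. Since $f_{n+1},\dots,f_{n+m}$ are $m$ generic linear forms and $x_1,\dots,x_m$ are the largest variables under rlex, Gaussian elimination yields a reduced Gr\"obner basis $\tilde h_i = x_i + \ell_i$ for $i = 1,\dots,m$, with $\ell_i \in K[x_{m+1},\dots,x_{n+m}]_1$ a generic linear form. In particular $\gin(\mm_m) = (x_1,\dots,x_m)$, and since $\mm_m \subseteq I+\mm_m$ this gives $(x_1,\dots,x_m) \subseteq \gin(I+\mm_m)$. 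Let $\pi\colon S \to K[x_{m+1},\dots,x_{n+m}]$ be the quotient map modulo $(\tilde h_1,\dots,\tilde h_m)$ and set $\psi = \pi \circ \varphi|_{K[x_1,\dots,x_n]}$. Specializing $\varphi$ to a suitable block matrix realizes any prescribed element of $\GL_n(K)$ as $\psi$, so the rational map $\varphi \mapsto \psi$ from $\GL_{n+m}(K)$ to $\GL_n(K)$ is dominant; hence for generic $\varphi$ the map $\psi$ corresponds, under $x_{m+i} \leftrightarrow x_i$, to a generic element of $\GL_n(K)$. Consequently $\ini(\psi(I)) = \sigma(\gin_n(I))$. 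Any element of $\psi(I)$, viewed in $S$, belongs to $\varphi(I) + (\tilde h_1,\dots,\tilde h_m) = \varphi(I+\mm_m)$ and has the same leading term in $S$ as in $K[x_{m+1},\dots,x_{n+m}]$, so $\sigma(\gin_n(I)) \subseteq \gin(I+\mm_m)$.

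Combining the two inclusions, $(x_1,\dots,x_m) + \sigma(\gin_n(I)) \subseteq \gin(I+\mm_m)$. Both sides are monomial ideals, and the Hilbert function of $S$ modulo the left-hand side equals that of $K[x_1,\dots,x_n]/\gin_n(I) = K[x_1,\dots,x_n]/I$, which coincides with the Hilbert function of $S/(I+\mm_m) = S/\gin(I+\mm_m)$. An inclusion of monomial ideals with equal Hilbert functions is an equality, so the formula holds. The main technical point is the genericity claim about $\psi$, which reduces to showing an explicit rational map $\GL_{n+m}(K) \dashrightarrow \GL_n(K)$ is dominant; the block-matrix specialization mentioned above makes this immediate.
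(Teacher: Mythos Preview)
Your argument is correct and takes a genuinely different route from the paper's. The paper proceeds element by element, using its Lemma~\ref{computegin}: for each monomial $x^a$ it rewrites the membership condition $x^a\in\gin(I+\mm_m)$ as a span condition on the residues $\overline{g^b}$ in $K[x_1,\dots,x_n]/I$ (where $g_i$ is the image of $f_i$ modulo $\mm_m$), then observes that applying an arbitrary automorphism $\psi^{-1}$ of $K[x_1,\dots,x_n]$ preserves span relations, so one may pass from $K[x_1,\dots,x_n]/I$ to $K[x_1,\dots,x_n]/\gin_n(I)$ without ever claiming that the $g_i$ themselves are generic. Your approach instead produces the explicit closed formula $\gin(I+\mm_m)=(x_1,\dots,x_m)+\sigma(\gin_n(I))$ by reducing $\varphi(I)$ modulo the linear Gr\"obner basis $\tilde h_1,\dots,\tilde h_m$ and arguing that the induced map $\varphi\mapsto\psi$ is dominant, so a generic $\varphi$ yields a generic $\psi$.

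What each buys: your route yields strictly more, namely the explicit description of $\gin(I+\mm_m)$, and it is self-contained Gr\"obner/elimination reasoning that does not rely on the linear-algebra framework of Section~\ref{linear}. The paper's route avoids any discussion of dominance of rational maps and stays within the $\tau$-operator formalism used throughout; its trick of applying an \emph{independent} automorphism $\psi$ sidesteps precisely the genericity issue you handle via dominance. Both ultimately rest on the idempotence $\gin_n(\gin_n(I))=\gin_n(I)$. One small cosmetic point: the paper's convention is $\gin(L)=\ini(\varphi^{-1}(L))$ rather than $\ini(\varphi(L))$, but since $\varphi$ is generic iff $\varphi^{-1}$ is, this does not affect your argument.
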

\begin{proof}
(i):
Let $f_1,\dots,f_{n+m} \subset S_1$ be a $K$-basis
such that for the $S$-automorphism
$\varphi \colon S \to S, x_i \mapsto f_i$
we can compute
$\gin( I+ \mm_m)$
and
$\gin( \gin_{n}(I)+ \mm_m)$
by using $\varphi^{-1}$.
We denote the residue class of  $f_i$   in $K[x_1,\dots,x_n]=S/\mm_m$
by $g_i$.

Let $\psi \colon K[x_1,\dots,x_n] \to K[x_1,\dots,x_n]$ be an $K[x_1,\dots,x_n]$-automorphism
such that the ideals
$\gin_{n}(I)$ and $\gin_{n}(\gin_{n}(I))$ can be computed by using $\psi^{-1}$.

For a monomial $x^a \in S$ we have that
\begin{eqnarray*}
&&
x^a\in \gin( I+ \mm_m)\\
&\Leftrightarrow&
\overline{f^a} \in \spanK_K \{\overline{f^b}: x^b<_{\rlex} x^a,\ |b|=|a|\} \subset S/(I+ \mm_m)\\
&\Leftrightarrow&
\overline{g^{a}} \in \spanK_K \{\overline{g^{b}}: x^b<_{\rlex} x^a,\ |b|=|a|\}
\subset K[x_1,\dots,x_n]/I\\
&\Leftrightarrow&
\overline{\psi^{-1}(g^{a})} \in \spanK_K \{\overline{\psi^{-1}(g^{b})}: x^b<_{\rlex} x^a,\ |b|=|a|\}
\subset K[x_1,\dots,x_n]/\psi^{-1}(I)\\
&\Leftrightarrow&
\overline{\psi^{-1}(g^{a})} \in \spanK_K \{\overline{\psi^{-1}(g^{b})}: x^b<_{\rlex} x^a,\ |b|=|a|\}
\subset K[x_1,\dots,x_n]/\gin_{n}(I).
\end{eqnarray*}
Here the first equality follows from Lemma \ref{computegin}.
The second one is trivial. The third one follows since $\psi^{-1}$ is an $K$-automorphism
and the last equality follows
since $S/\psi^{-1}(I)$
and $K[x_1,\dots,x_n]/\gin_{n}(I)=K[x_1,\dots,x_n]/\ini_{<_{\rlex}}(\psi^{-1}(I))$
share a common $K$-vector space basis.
Similarly we compute
\begin{eqnarray*}
&&
x^a\in \gin( \gin_{n}(I)+ \mm_m)\\
&\Leftrightarrow&
\overline{f^a} \in \spanK_K \{\overline{f^b}: x^b<_{\rlex} x^a,\ |b|=|a|\} \subset S/(\gin_{n}(I)+ \mm_m)\\
&\Leftrightarrow&
\overline{g^{a}} \in \spanK_K \{\overline{g^{b}}: x^b<_{\rlex} x^a,\ |b|=|a|\}
\subset K[x_1,\dots,x_n]/\gin_{n}(I)\\
&\Leftrightarrow&
\overline{\psi^{-1}(g^{a})} \in \spanK_K \{\overline{\psi^{-1}(g^{b})}: x^b<_{\rlex} x^a,\ |b|=|a|\}
\subset K[x_1,\dots,x_n]/\psi^{-1}(\gin_{n}(I))\\
&\Leftrightarrow&
\overline{\psi^{-1}(g^{a})} \in \spanK_K \{\overline{\psi^{-1}(g^{b})}: x^b<_{\rlex} x^a,\ |b|=|a|\}
\subset K[x_1,\dots,x_n]/\gin_{n}(\gin_{n}(I))\\
&\Leftrightarrow&
\overline{\psi^{-1}(g^{a})} \in \spanK_K \{\overline{\psi^{-1}(g^{b})}: x^b<_{\rlex} x^a,\ |b|=|a|\}
\subset K[x_1,\dots,x_n]/\gin_{n}(I).
\end{eqnarray*}
The last equality is new and it follows from the  fact that
$\gin_{n}(\gin_{n}(I))=\gin_{n}(I)$.
Thus we see that
$$
\gin( I+ \mm_m)=
\gin( \gin_{n}(I)+ \mm_m)
$$
and this shows (i).
Analogously one proves (ii).
\end{proof}

We need the following result which relates the numbers $d_I(a)$ introduced in Section \ref{linear}
of various ideals.
\begin{prop}
\label{mainhelper}
Let
$I \subset K[x_1,\dots,x_n]$ and
$J \subset K[x_{n+1},\dots,x_{n+m}]$
be graded ideals and $x^a \in S_{d}$ be a monomial with $d>1$.
We have that
$$
d_{I+J+Q}(a)=
d_{I+\mm_m}(a)+
d_{J+\mm_n}(a).
$$
\end{prop}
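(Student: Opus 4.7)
The plan is to apply Proposition~\ref{ker2} to each of the three ideals $I+J+Q$, $I+\mm_m$, $J+\mm_n$ and reduce the statement to an additivity of dimensions of kernel intersections. Writing $x^{a'}=x^a/x_{\min(a)}$, which has degree $d-1\geq 1$, Proposition~\ref{ker2} rewrites
$$d_L(a)=\dim_K\bigcap_{x^b<_{\rlex}x^{a'},\,|b|=d-1}\Kern_{S/L}(\tau_{f^b})_d\;-\;\dim_K\bigcap_{x^b\leq_{\rlex}x^{a'},\,|b|=d-1}\Kern_{S/L}(\tau_{f^b})_d.$$
Since finite intersections distribute over direct sums of subspaces lying in transverse ambient summands, the claim will follow once we exhibit, for every single monomial $x^b$ with $|b|=d-1$, the direct-sum identity
$$\Kern_{S/(I+J+Q)}(\tau_{f^b})_d=\Kern_{S/(I+\mm_m)}(\tau_{f^b})_d\oplus\Kern_{S/(J+\mm_n)}(\tau_{f^b})_d$$
of $K$-subspaces of $(S/(I+J+Q))_d\subseteq S_d$; the desired formula then emerges by subtraction.

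The starting point for that identity is the structural splitting of the fibre product: in each positive degree $d$, the monomial basis of $S/(I+J+Q)$ is the disjoint union of the monomials in $x_1,\dots,x_n$ not in $\ini(I)$ and the monomials in $x_{n+1},\dots,x_{n+m}$ not in $\ini(J)$, since every mixed monomial lies in $Q$. Hence
$$(S/(I+J+Q))_d=(S/(I+\mm_m))_d\oplus(S/(J+\mm_n))_d$$
as $K$-subspaces of $S_d$, with the first summand contained in $K[x_1,\dots,x_n]_d$ and the second in $K[x_{n+1},\dots,x_{n+m}]_d$. Writing $g=g_1+g_2$ in this decomposition, the key observation is that $\tau_{f^b}$ respects the variable partition on targets: expanding $f^b=\sum_a d_a\,x^a$, the rule $\tau_{x^a}(x^c)=x^{c-a}$ (or $0$) annihilates $g_1$ unless $x^a\in K[x_1,\dots,x_n]$, in which case the image again lies in $K[x_1,\dots,x_n]$. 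Thus $\tau_{f^b}(g_1)\in K[x_1,\dots,x_n]_1$ and symmetrically $\tau_{f^b}(g_2)\in K[x_{n+1},\dots,x_{n+m}]_1$, two transverse subspaces of $S_1$.

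To conclude I compare the three relevant initial-ideal pieces in degree one. Since no mixed monomials live in degree one, $\ini(I+J+Q)_1=\ini(I)_1\oplus\ini(J)_1$, while $\ini(I+\mm_m)_1\cap K[x_1,\dots,x_n]_1=\ini(I)_1$ and analogously $\ini(J+\mm_n)_1\cap K[x_{n+1},\dots,x_{n+m}]_1=\ini(J)_1$. Therefore $\tau_{f^b}(g)=\tau_{f^b}(g_1)+\tau_{f^b}(g_2)$ lies in $\ini(I+J+Q)_1$ if and only if $\tau_{f^b}(g_1)\in\ini(I)_1$ and $\tau_{f^b}(g_2)\in\ini(J)_1$, which is precisely the conjunction $g_1\in\Kern_{S/(I+\mm_m)}(\tau_{f^b})_d$ and $g_2\in\Kern_{S/(J+\mm_n)}(\tau_{f^b})_d$. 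Both inclusions of the desired kernel identity follow at once, and intersecting over the two families of $b$'s prescribed by Proposition~\ref{ker2} gives the claimed additivity. The main obstacle is bookkeeping: each $S/L$ must be handled as the concrete $K$-subspace of $S$ spanned by its monomial basis, and one must verify that the projections $S_1\to(S/L)_1$ implicit in the definition of $\Kern_{S/L}(\tau_{f^b})_d$ interact compatibly with the splitting above. The hypothesis $d>1$ is what forces $d-|b|=1$, the one degree in which mixed monomials are absent and the decomposition of the relevant initial ideals is clean.
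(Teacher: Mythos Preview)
Your argument is correct and follows the paper's approach: both use the splitting $(S/(I+J+Q))_d=(S/(I+\mm_m))_d\oplus(S/(J+\mm_n))_d$ for $d>0$, check that each $\tau_{f^b}$ respects it (the paper packages this as a commutative square, you unwind it by hand), deduce additivity of the kernel intersections, and apply Proposition~\ref{ker2}. One cleanup: there is no ``projection $S_1\to(S/L)_1$'' implicit in $\Kern_{S/L}(\tau_{f^b})$---since divisors of monomials outside $\ini(L)$ remain outside $\ini(L)$, the map $\tau_{f^b}$ already carries $(S/L)_d$ into $(S/L)_1$ as subspaces of $S$, and the kernel condition is literally $\tau_{f^b}(g)=0$; your detour through $\ini(L)_1$ is therefore unnecessary (though harmless, since $(S/L)_1\cap\ini(L)_1=0$), and the transversality of $K[x_1,\dots,x_n]_1$ and $K[x_{n+1},\dots,x_{n+m}]_1$ in $S_1$ already delivers the equivalence you need.
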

\begin{proof}
As a $K$-vector space we may see
$S/(I+\mm_m)$ and $S/(J+\mm_n)$ as subspaces of $S/(I+J+Q)$ using the standard $K$-bases
$\Bc(S/(I+\mm_m)),\Bc(S/(J+\mm_n))$ and $\Bc(S/(I+J+Q))$ respectively.
Using the equality $(I+\mm_m) \cap (J+\mm_n)=I+J+Q$ it follows from a direct computation
that the homomorphism of $K$-vector spaces
$$
\rho_d\colon
S/(I+\mm_m)_{d} \oplus S/(J+\mm_n)_{d}\to
 S/(I+J+Q)_{d},\
(g,h) \mapsto g+h,
$$
is surjective with trivial kernel for $d>0$. Thus it is an isomorphism for $d > 0$.

Let $x^{a'}=x^a/x_{\min(a)}$. Then $|a'|=d-1 > 0$.
The following  diagram
$$
\begin{CD}
S/(I+\mm_m)_{d} \oplus S/(J+\mm_n)_{d}
 @>\rho_{d} >>
S/(I+J+Q)_{d}
\\
@V\bigoplus_{x^b<_{\rlex}x^{a'}} \tau_{f^b} V\bigoplus_{x^b<_{\rlex}x^{a'}} \tau_{f^b}V
@V\bigoplus_{x^b<_{\rlex}x^{a'}} \tau_{f^b}VV\\
\bigoplus_{x^b<_{\rlex}x^{a'}} S/(I+\mm_m)_{1} \oplus \bigoplus_{x^b<_{\rlex}x^{a'}} S/(J+\mm_n)_{1}
@>\oplus_{x^b<_{\rlex}x^{a'}} \rho_{1}>>
\bigoplus_{x^b<_{\rlex}x^{a'}} S/(I+J+Q)_{1}
\end{CD}
$$
is commutative where the direct sums are take over monomials of degree $d-1$.
It follows that
$$
\dim_K \bigcap_{x^b<_{\rlex}x^{a'},\ |b|=d-1} \Kern_{S/(I+J+Q)} (\tau_{f^b})_{d}
$$
$$=
\dim_K \bigcap_{x^b<_{\rlex}x^{a'},\ |b|=d-1} \Kern_{S/(I+\mm_m)} (\tau_{f^b})_{d}
+
\dim_K \bigcap_{x^b<_{\rlex}x^{a'},\ |b|=d-1} \Kern_{S/(J+\mm_m)} (\tau_{f^b})_{d}
$$
Analogously we see that
$$
\dim_K \bigcap_{x^b\leq_{\rlex}x^{a'},\ |b|=d-1} \Kern_{S/(I+J+Q)} (\tau_{f^b})_{d}
$$
$$=
\dim_K \bigcap_{x^b\leq_{\rlex}x^{a'},\ |b|=d-1} \Kern_{S/(I+\mm_m)} (\tau_{f^b})_{d}+
\dim_K \bigcap_{x^b\leq_{\rlex}x^{a'},\ |b|=d-1} \Kern_{S/(J+\mm_m)} (\tau_{f^b})_{d}
$$
Now Proposition \ref{ker2} implies that
\begin{eqnarray*}
d_{I+J+Q}(a) &=& |\Sh(a') \cap \Bc(S/\gin(I+J+Q))|\\
&=&
|\Sh(a') \cap \Bc(S/\gin(I+\mm_m))|+ |\Sh(a') \cap \Bc(S/\gin(J+\mm_n))|\\
&=&
d_{I+\mm_m}(a)+
d_{J+\mm_n}(a).
\end{eqnarray*}
This concludes the proof.
\end{proof}

We are ready to prove Theorem \ref{Nevo1}.

\begin{proof}[Proof of \ref{Nevo1}]
Let $x^a \in S_d$ be a monomial for an integer $d>0$.
If $d=1$ then
$\gin( I+ J + Q)_1=
\gin( \gin_{n}(I)+ \gin_{m}(J) + Q)_1$ for trivial reasons.
Assume that $d>1$.
Applying Lemma \ref{easyhelper},
Proposition \ref{mainprop} and  Proposition \ref{mainhelper} we see that
\begin{eqnarray*}
d_{I+J+Q}(a)
&=&
d_{I+\mm_m}(a) + d_{J+\mm_n}(a)
=
d_{\gin(I+\mm_m)}(a) + d_{\gin(J+\mm_n)}(a)\\
&=&
d_{\gin(\gin_{n}(I)+\mm_m)}(a) + d_{\gin(\gin_{m}(J)+\mm_n)}(a)
=
d_{\gin_{n}(I)+\mm_m}(a) + d_{\gin_{m}(J)+\mm_n}(a)\\
&=&
d_{\gin_{n}(I)+\gin_{m}(J)+Q}(a).
\end{eqnarray*}
Thus it follows from Lemma \ref{helper1} that
$$
x^a \not\in \gin( I+ J + Q)
\Leftrightarrow
x^a \not\in \gin( \gin_{n}(I)+ \gin_{m}(J) + Q).
$$
\end{proof}

\begin{rem} It is clear that rlex plays an important role in the arguments we have used to prove \ref{Nevo1}.
For other term orders  the statement of \ref{Nevo1} fails to be true.
For instance, for the lex order, $m=n=3$ the ideals  $I=(x_1^2, x_1x_2)$ and $J=(x_4^2, x_5x_6)$  do not fulfill \ref{Nevo1}.
 \end{rem}

In view of Theorem \ref{Nevo1}, the ideal  $\gin( F(I,J))$ can be computed by first computing   $\gin_n(I),\gin_m(J)$
and then  $\gin( F(\gin_n(I),\gin_m(J)))$.  One would like to describe the generators of
the ideal  $\gin( F(I,J))$ in terms of those of $\gin_n(I)$ and $\gin_m(J)$. The first step in this direction is the following:

 \begin{prop}
\label{computegin2}
Let $Q=(x_i : 1\leq i\leq n)(x_j : n+1\leq j\leq m)\subset S$.
Then:
\begin{enumerate}
\item $\gin(Q)=
(x_ix_j: i+j \leq n+m,\ i \leq j \text{ \rm and } i \leq \min\{n,m\})$.
\item  $\gin(Q^k)=\gin(Q)^k$ for all $k$.
\end{enumerate}
\end{prop}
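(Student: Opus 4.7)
For part (i), my plan is to compute $d_Q(a)$ via Proposition~\ref{mainhelper} and apply Lemma~\ref{helper1}. Call the proposed ideal $L$. Taking $I=J=0$ in Proposition~\ref{mainhelper} gives
$$d_Q(a)=d_{\mm_m}(a)+d_{\mm_n}(a)$$
for every monomial $x^a$ with $|a|>1$. Since $\mm_n$ is generated by the top $n$ variables in rlex, $\gin(\mm_n)=\mm_n=(x_1,\ldots,x_n)$, while a generic $m$-dimensional subspace of $S_1$ has initial terms $x_1,\ldots,x_m$, so $\gin(\mm_m)=(x_1,\ldots,x_m)$. Combining this with Lemma~\ref{easyhelper} and inspecting the standard bases of $S/\gin(\mm_n)$ and $S/\gin(\mm_m)$ yields
$$d_{\mm_n}(a)=\max\{0,p-n\},\qquad d_{\mm_m}(a)=\max\{0,p-m\},$$
where $p=\min(x^a/x_{\min(a)})$. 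Setting $q=\min(a)$, Lemma~\ref{helper1} translates $x^a\in\gin(Q)$ into the inequality $p-q+1>d_Q(a)$; a case split on the position of $p$ relative to $\min\{n,m\}$ and $\max\{n,m\}$ converts this into the conditions $q\le\min\{n,m\}$ and $q+p\le n+m$. Since $L$ is strongly stable and generated in degree two, a monomial $x^a$ with sorted indices $a_1\le\cdots\le a_d$ lies in $L$ iff $x_{a_1}x_{a_2}$ is a generator of $L$, matching the $\gin(Q)$-criterion exactly.

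For part (ii), the plan has three steps. First, the general inequality $\ini(I)^k\subseteq\ini(I^k)$ applied to $I=\varphi^{-1}(Q)$ for a generic $\varphi$ yields $L^k=\gin(Q)^k\subseteq\gin(Q^k)$. Second, $Q=\mm_n\mm_m$ has a $2$-linear resolution (obtained as the tensor product of the Koszul resolutions of $\mm_n$ and $\mm_m$), and more generally $Q^k$ has $2k$-linear resolution, so $\reg(Q^k)=2k$; since rlex preserves regularity, $\gin(Q^k)$ is a strongly stable ideal of regularity $2k$, hence generated in degree exactly $2k$, and $L^k$ (strongly stable as a product of strongly stable ideals) is likewise generated in degree $2k$. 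Third, I would compare Hilbert functions in degree $2k$: on one side $\dim_K(Q^k)_{2k}=\binom{n+k-1}{k}\binom{m+k-1}{k}$, since a degree-$2k$ monomial lies in $Q^k=\mm_n^k\mm_m^k$ iff it has exactly $k$ variables in each block; on the other, a monomial $x^a$ of degree $2k$ with sorted indices $a_1\le\cdots\le a_{2k}$ lies in $L^k$ iff those indices admit a partition into $k$ generator pairs of $L$, and by a pigeonhole argument this is equivalent to the ``outside-in'' pairing $(a_t,a_{2k-t+1})$ being valid, namely $a_t\le\min\{n,m\}$ and $a_t+a_{2k-t+1}\le n+m$ for $t=1,\ldots,k$. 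A direct enumeration of such sorted sequences then yields the same binomial product, giving $\gin(Q^k)=L^k$.

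The main obstacle I anticipate is the final combinatorial identity: natural bijection candidates onto pairs of non-decreasing sequences in $[1,n]^k\times[1,m]^k$ encounter coupled constraints between the two halves, so either a careful Lindstr\"om--Gessel--Viennot style determinantal count or a generating function argument using the structure of $L$ is likely needed.
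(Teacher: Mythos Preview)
Your approach to (i) is essentially the paper's: both compute $d_Q(a)$ via Proposition~\ref{mainhelper} with $I=J=0$, identify $\gin(\mm_n)=(x_1,\dots,x_n)$ and $\gin(\mm_m)=(x_1,\dots,x_m)$, and invoke Lemma~\ref{helper1}. The paper restricts attention to degree~$2$ (using that $\gin(Q)$ has linear resolution) and checks only one inclusion plus a dimension count, while you run the criterion in every degree; the computations are the same.

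For (ii) your route diverges from the paper's. Your reduction is correct through the description of $(L^k)_{2k}$: the outside-in pairing claim holds (swap the partner of $a_{2k}$ with $a_1$, observe that the leftover pair $\{i_1,c\}$ is still valid since $i_1\le\min\{n,m\}$ and $i_1+c\le (n+m-a_{2k})+a_{2k}$, then induct), and for $k=2$ your description coincides with the paper's conditions~(\ref{ijhk}). What remains is the identity
\[
\bigl|\{a_1\le\cdots\le a_{2k}:\ a_t\le\min\{n,m\},\ a_t+a_{2k+1-t}\le n+m\ (t\le k)\}\bigr|=\tbinom{n+k-1}{k}\tbinom{m+k-1}{k},
\]
and here the paper takes a genuinely different and shorter path. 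Rather than prove this for all $k$, the paper rephrases the assertion as saying that the degree-$2$ generators of $L$ form a Sagbi basis of $K[\varphi(Q_2)]$ for generic $\varphi$, observes that the algebra relations among these monomials are inherited from the second Veronese and hence are quadratic, and then applies the Sagbi lifting criterion to reduce everything to the single case $k=2$. That case is handled by an explicit (non-bijective) summation yielding $\binom{n+1}{2}\binom{m+1}{2}$. So the paper trades your general combinatorial identity for a short algebraic reduction plus one concrete count; your approach, if completed, would give a uniform combinatorial proof, but the bijection is not obvious---the paper itself remarks that already for $k=2$ a bijective proof would be desirable.
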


\begin{proof}
Every ideal which is a product of ideals of linear forms has a linear resolution, see \cite{COHE03}.
The gin-rlex  of an ideal with linear resolution has linear resolution \cite{EI95}.
Hence we know that $\gin(Q^k)$ is generated in degree $2k$ for all $k$.
Therefore it is enough to prove equality  (i) in degree $2$ and to prove equality (ii) in degree $2k$.
To check that (i) holds in degree $2$, it is enough to prove the inclusion $\supseteq$
since the two vector spaces involved have  vector space dimension equal to $mn$.
It follows from Lemma \ref{helper1}
that for a monomial $x_ix_j$ with $i\leq j$ we have
$$
x_ix_j \in \gin(Q)
\Longleftrightarrow
j - i + 1 > d_Q(\epsilon_i+\epsilon_j)
$$
where for $k\in \{1,\dots,n+m\}$ we denote by $\epsilon_k$ the $k$-th
standard basis vector of $\Z^{n+m}$.
By Lemma \ref{easyhelper} and Proposition \ref{mainhelper}
we know
\begin{eqnarray}
\label{eqlast}
d_{Q}(\epsilon_i+\epsilon_j)
&=&
d_{\mm_m}(\epsilon_i+\epsilon_j)+
d_{\mm_n}(\epsilon_i+\epsilon_j)=
d_{\gin(\mm_m)}(\epsilon_i+\epsilon_j)+
d_{\gin(\mm_n)}(\epsilon_i+\epsilon_j)\\
&=&
d_{(x_1,\dots,x_m)}(\epsilon_i+\epsilon_j)+
d_{(x_1,\dots,x_n)}(\epsilon_i+\epsilon_j)
\end{eqnarray}
where we used the obvious facts
$\gin(\mm_m)=(x_1,\dots,x_m)$
and $\gin(\mm_n)=(x_1,\dots,x_n)$.

Thus we have to compute
$d_{(x_1,\dots,x_m)}(\epsilon_i+\epsilon_j)$
and
$d_{(x_1,\dots,x_n)}(\epsilon_i+\epsilon_j)$ respectively.
 We treat the case $n \leq m$; if $n>m$ one argues  analogously.
At first we consider
$$
1\leq i,j \leq n+m \text{ with }
i+j \leq n+m,\ i \leq j \text{ and } i \leq n.
$$
It follows from the definition that
$$
d_{(x_1,\dots,x_m)}(\epsilon_i+\epsilon_j)=
|\{ x_lx_j : m+1 \leq l,j \leq m+n,\ l\leq j \} |.
$$
This number is only nonzero if  $j \geq m+1$ and in this case we have
$
d_{(x_1,\dots,x_m)}(\epsilon_i+\epsilon_j)=
j-m.
$
Similarly, we have that
$
d_{(x_1,\dots,x_n)}(\epsilon_i+\epsilon_j)
$
is only nonzero if $j \geq n+1$ and in this case we have
$
d_{(x_1,\dots,x_n)}(\epsilon_i+\epsilon_j)=
j-n.
$
In any case it follow from Equation (\ref{eqlast}) that
$$
d_{Q}(\epsilon_i+\epsilon_j)
\leq
(j-n) + (j-m)
\leq
(m-i) + (j-m)<
j-i+1
$$
where we used that $i+j \leq m+n$.
Hence we get that $x_ix_j \in \gin(Q)$ as desired. As explained above, this proves (i).

As observed above, to prove (ii) it is enough to prove the asserted equality in degree $2k$.
Consider the $K$-algebra $K[\varphi(Q_2)]$ generated by $\varphi(Q_2)$ for a generic change of coordinates $\varphi$.
The assertion we have to prove is equivalent to the assertion that the initial algebra
of $K[\varphi(Q_2)]$ is generated as a $K$-algebra by the elements $x_ix_j$
described in (i) which generate $\gin(Q)$ as an ideal.
We claim that the algebra relations among those $x_ix_j$  are of degree $2$.
This  can be easily seen  by eliminating from the
second Veronese ring of $S$ using the know Gr\"obner bases \cite{CO94} or \cite{ST96}.
Hence,  by the Buchberger-like criterion
for being a Sagbi basis \cite{COHEVA96},
it suffices to prove that $\gin(Q^2)=\gin(Q)^2$ in degree $4$.
This can be done by checking that the two vector spaces involved have the same dimension.
The dimension of $\gin(Q^2)_4$ is equal to that of $Q^2_4$ which is $\binom{n+1}{2}\binom{m+1}{2}$.
It remains to compute the dimension of  $\gin(Q)^2_4$. Observe that a basis of   $\gin(Q)^2_4$
is given by the monomials $x_ix_jx_hx_k$ such that the following conditions hold:
\begin{equation}\label{ijhk}
\left\{ \begin{array}{l}
1\leq i\leq j\leq h\leq k\leq m+n, \\
 j\leq \min(n,m),\\
 i+k\leq n+m, \\
 j+h\leq n+m.
 \end{array}
 \right.
 \end{equation}
It would be nice to have a bijection showing that the number, call it  $W(n,m)$,  of $(i,j,h,k)$  satisfying (\ref{ijhk}) is equal to  $\binom{n+1}{2}\binom{m+1}{2}$.  We content ourself with a non-bijective proof.  Assuming  that $n\leq m$, a simple  evaluation of the formula
$$\sum_{i=1}^n\  \sum_{j=i}^n\  \sum_{h=j}^{m+n-j}\ \sum_{k=h}^{m+n-i}1$$
 allows us to write $W(n,m)$ as
\begin{equation} \label{part1}
\sum_{i=1}^n  \binom{n+m-2i+3}{ 3}
\end{equation}
plus
\begin{equation}\label{part2}
-\binom{m+2}{4}-\binom{n+2}{4}+\binom{m-n+2}{4}.
\end{equation}
The expression (\ref{part1}) can be evaluated, depending on whether $m+n$ is odd or even,  using the formulas:
\begin{equation}
\sum_{i=0}^p \binom{2i+u}{3}=\left\{
\begin{array}{ll}
p(p+1)(2p^2+2p-1)/6 & \mbox{if } u=1,\\
(p^2-1)p^2/3 & \mbox{if } u=0.
\end{array}\right.
\end{equation}
It follows that the expression of (\ref{part1}) is indeed equal to
\begin{equation} \label{part3}
1/6n^3m + 1/6nm^3 + 1/6n^3 + 1/2nm^2 + 1/6nm - 1/6n
\end{equation}
That summing (\ref{part3}) with  (\ref{part2}) one gets $\binom{m+1}{2}\binom{n+1}{2}$ is a straightforward computation.
\end{proof}

\begin{rem}
\begin{enumerate}
\item[(a)]  Refining the  arguments \ref{computegin2}  one can show that statement (i) and (ii)
hold  for every term order satisfying $x_1>\dots>x_{m+n}$.
\item[(b)] The equality $\gin(Q^k)=\gin(Q)^k$ is quite remarkable. We do not know other families  of ideals satisfying it. It is difficult to guess a generalization of it. For instance that equality  does not hold for a product of $3$ or more ideals of linear forms, e.g. $(x_1,x_2)(x_3,x_4)(x_5, x_6)$. And it does not hold for the ideal defining the fibre product of $3$ of more polynomial rings, e.g.  $(x_1,x_2)(x_3,x_4)+(x_1,x_2)(x_5, x_6)+(x_3,x_4)(x_5, x_6)$.
\item[(c)] The statements (i) and (ii) of  \ref{computegin2}, can be rephrased as follows: The $(n,m)$-th  Segre product  in generic coordinates has a toric degeneration to a ladder of the second Veronese ring  of the polynomial ring in  $m+n-1$ variables  and this holds independently of  the term order.
\end{enumerate}
\end{rem}

%
%
%
\section{Symmetric shifted simplicial complexes}
\label{symmetric}

Recall that $\Gamma$ is called a {\em simplicial complex} on the vertex set
$\{x_1,\dots,x_n\}$ if $\Gamma$ is a subset of the power set of $\{x_1,\dots,x_n\}$
which is closed under inclusion.
The elements $F$ of $\Gamma$ are called {\em faces}, and the maximal
elements under inclusion are called {\em facets}.
If $F$ contains $d+1$ elements of $[n]$, then $F$ is called a {\em $d$-dimensional} face, and we write $\dim F = d$.
The empty set is a face of dimension $-1$.
If $\Gamma \neq \emptyset$,
then the {\em dimension} $\dim \Gamma $ is the maximum of the dimensions of the faces of $\Gamma$.
Let $f_i$ be the total number of $i$-dimensional faces of $\Gamma$
for $i=-1,\dots,\dim \Gamma$. The vector
$f(\Gamma)=(f_{-1},\dots,f_{\dim \Gamma})$ is called the {\em $f$-vector} of $\Gamma$.

Let $\mathcal{C}(x_1,\dots,x_n)$ be the set of simplicial complexes on $\{x_1,\dots,x_n\}$.
Following constructions of Kalai we define axiomatically the concept of algebraic shifting.
See  \cite{HE01} and \cite{KA01} for surveys on this subject.
We call a map $\Delta \colon \mathcal{C}(x_1,\dots,x_n) \to \mathcal{C}(x_1,\dots,x_n)$
an {\em algebraic shifting operation} if the following
is satisfied:

\begin{enumerate}
\item[(S1)]
If $\Gamma \in \mathcal{C}(x_1,\dots,x_n)$,
then $\Delta(\Gamma)$ is a {\em shifted complex},
\ie for all $F \in \Delta(\Gamma)$ and $i<j$ with $x_i\in F$
we have that $F\setminus\{x_i\} \cup \{x_j\} \in \Delta(\Gamma)$.
\item[(S2)]
If $\Gamma \in \mathcal{C}(x_1,\dots,x_n)$ is shifted, then $\Delta(\Gamma)=\Gamma$.
\item[(S3)]
If $\Gamma \in \mathcal{C}(x_1,\dots,x_n)$, then $f(\Gamma)=f(\Delta(\Gamma))$.
\item[(S4)]
If $\Gamma', \Gamma \in \mathcal{C}(x_1,\dots,x_n)$
and $\Gamma' \subseteq \Gamma$ is a subcomplex,
then $\Delta(\Gamma') \subseteq \Delta(\Gamma)$ is a subcomplex.
\end{enumerate}

Let $K$ be a field and $S=K[x_1,\dots,x_n]$ be the polynomial ring
in $n$ indeterminates.
We define the {\em Stanley-Reisner ideal}
$I_\Gamma=(\prod_{x_i \in F} x_i : F\subseteq \{x_1,\dots,x_n\},\ F\not\in \Gamma)$
and
the {\em Stanley-Reisner ring}
$K[\Gamma]=S/I_\Gamma$.
Denote by $x_F$ the squarefree monomial $\prod_{x_i \in F} x_i$.

Note that a simplicial complex $\Gamma$ is shifted
if and only if the Stanley-Reisner ideal
$I_\Gamma \subset S=K[x_1,\dots,x_n]$
is a {\em squarefree strongly stable ideal} with respect to $x_1>\dots>x_n$,
\ie for all $F\subseteq \{x_1,\dots,x_n\}$ with $x_F \in I_\Gamma$
and all $x_i \in F$, $j<i$, $x_j \not\in F$
we have that $(x_jx_F)/x_i \in I_\Gamma$.

Here we are interested in the following shifting operation introduced by Kalai.
We follow the algebraic approach of Aramova, Herzog and Hibi.
Assume that $K$ is a field of characteristic $0$.
We consider the following operation on monomial ideals of $S$.
For a monomial
$m=x_{i_1}\cdots x_{i_t}$ with $i_1\leq i_2 \dots\leq i_t$
of $S$ we set $m^\sigma=x_{i_1}x_{i_2+1}\dots x_{i_t+ t-1}$.
For a monomial ideal $I$ with unique minimal system of generators $G(I)=\{m_1,\dots,m_s\}$
we set $I^\sigma=(m_1^\sigma,\dots,m_s^\sigma)$ in a suitable polynomial ring with sufficiently many variables.

Let $\Gamma$ be a simplicial complex on the vertex set $\{x_1,\dots,x_n\}$ with
Stanley-Reisner
ideal
$I_{\Gamma}$.
The {\em symmetric algebraic shifted complex} of $\Gamma$ is the unique
simplicial complex $\Delta^s(\Gamma)$ on the vertex set $\{x_1,\dots,x_n\}$ such that
$
I_{\Delta^s(\Gamma)} = \gin(I_\Gamma)^\sigma \subset S
$
where
$\gin(\cdot)$ is
the generic initial ideal  with respect to the reverse lexicographic order induced by $x_1>\dots>x_n$.
It is not obvious that $\Delta^s(\cdot)$ is indeed a shifting operation.
The first difficulty is already that
$I_{\Delta^s(\Gamma)}$ is an ideal of $S$. This and the proofs of the other properties can be found in \cite{HE01}.

We want to give a combinatorial application of the results of Section \ref{fibre}.
For two simplicial complexes
$\Gamma_1$ on the vertex set $\{x_1,\dots,x_n\}$ and
$\Gamma_2$ on the vertex set $\{x_{n+1},\dots,x_{n+m}\}$
we denote by
$$\Gamma_1 \uplus \Gamma_2$$
the {\em disjoint union}
of $\Gamma_1$ and $\Gamma_2$ on the vertex set $\{x_1,\dots,x_n,x_{n+1},\dots,x_{n+m}\}$.
The following theorem was conjectured by Kalai in \cite{KA01}.
Observe that it was proved for exterior shifting by Nevo \cite{NE03}.

\begin{thm}
\label{mainshifted}
Let $\Gamma_1$ be a simplicial complex on the vertex set $\{x_1,\dots,x_n\}$ and
$\Gamma_2$ be a simplicial complex on the vertex set
$\{x_{n+1},\dots,x_{n+m}\}$. Then we have that
$$
\Delta^s(\Gamma_1 \uplus \Gamma_2)=
\Delta^s(\Delta^s(\Gamma_1) \uplus \Delta^s(\Gamma_2)).
$$
\end{thm}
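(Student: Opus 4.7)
The plan is to reduce Theorem \ref{mainshifted} to Theorem \ref{Nevo1} via the dictionary between squarefree monomial ideals and simplicial complexes. My first step is to observe that the disjoint union of simplicial complexes corresponds to the fibre product construction at the level of Stanley--Reisner ideals: inside $S=K[x_1,\dots,x_{n+m}]$ one has
$$I_{\Gamma_1 \uplus \Gamma_2} = I_{\Gamma_1} + I_{\Gamma_2} + Q = F(I_{\Gamma_1}, I_{\Gamma_2}),$$
since a subset $F\subseteq\{x_1,\dots,x_{n+m}\}$ fails to be a face of $\Gamma_1\uplus\Gamma_2$ precisely when either $F$ meets both vertex sets (contributing $Q$) or $F$ is contained in one vertex set and is a non-face of the corresponding $\Gamma_i$.

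With this identification in hand I would apply Theorem \ref{Nevo1} twice, first with $(I,J)=(I_{\Gamma_1},I_{\Gamma_2})$ and then with $(I,J)=(I_{\Delta^s(\Gamma_1)},I_{\Delta^s(\Gamma_2)})$. The two applications yield
$$\gin(I_{\Gamma_1\uplus\Gamma_2})=\gin\bigl(F(\gin_n(I_{\Gamma_1}),\gin_m(I_{\Gamma_2}))\bigr)$$
and
$$\gin(I_{\Delta^s(\Gamma_1)\uplus\Delta^s(\Gamma_2)})=\gin\bigl(F(\gin_n(I_{\Delta^s(\Gamma_1)}),\gin_m(I_{\Delta^s(\Gamma_2)}))\bigr).$$
To conclude it then suffices to match the right-hand sides, and for this I would establish the two key identities $\gin_n(I_{\Delta^s(\Gamma_1)})=\gin_n(I_{\Gamma_1})$ and $\gin_m(I_{\Delta^s(\Gamma_2)})=\gin_m(I_{\Gamma_2})$.

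The verification of these identities is the step I expect to be the main obstacle. Unwinding the definition $I_{\Delta^s(\Gamma_1)}=\gin_n(I_{\Gamma_1})^\sigma$, the first identity becomes $\gin_n(\gin_n(I_{\Gamma_1})^\sigma)=\gin_n(I_{\Gamma_1})$. To see this I would invoke the idempotence of $\Delta^s$, which is axiom (S2) applied to the already shifted complex $\Delta^s(\Gamma_1)$: the equality $\Delta^s(\Delta^s(\Gamma_1))=\Delta^s(\Gamma_1)$ translates at the level of ideals to $\gin_n(\gin_n(I_{\Gamma_1})^\sigma)^\sigma=\gin_n(I_{\Gamma_1})^\sigma$, after which one cancels the outer $\sigma$ using the fact that $(-)^\sigma$ is injective on the class of strongly stable monomial ideals. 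Since every gin in characteristic zero is strongly stable, both operands lie in the domain of this injectivity and the identity follows; the analogous argument handles $\Gamma_2$.

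Putting everything together, the two gins coincide, and applying $(-)^\sigma$ to both sides yields $I_{\Delta^s(\Gamma_1\uplus\Gamma_2)}=I_{\Delta^s(\Delta^s(\Gamma_1)\uplus\Delta^s(\Gamma_2))}$, which is equivalent to the claim of the theorem.
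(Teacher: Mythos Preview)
Your proposal is correct and follows essentially the same route as the paper: identify $I_{\Gamma_1\uplus\Gamma_2}=F(I_{\Gamma_1},I_{\Gamma_2})$, apply Theorem~\ref{Nevo1} twice, and bridge the two applications via the identity $\gin_n(\gin_n(I_{\Gamma_1})^\sigma)=\gin_n(I_{\Gamma_1})$ (and its analogue for $\Gamma_2$). The only cosmetic difference is in how that identity is justified: the paper extracts it directly from the proof of \cite[Theorem~8.19]{HE01}, whereas you deduce it from the idempotence $\Delta^s\circ\Delta^s=\Delta^s$ (axioms (S1)+(S2)) together with the injectivity of $(-)^\sigma$ on strongly stable ideals---but these are two phrasings of the same fact, since (S2) for $\Delta^s$ is established in \cite{HE01} precisely through this computation.
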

\begin{proof}
The Stanley-Reisner ideal of $\Gamma_1 \uplus \Gamma_2$
is
$
I_{\Gamma_1 \uplus \Gamma_2}=I_{\Gamma_1}+ I_{\Gamma_2} + Q.
$
To determine the simplicial complex  $\Delta^s(\Gamma_1 \uplus \Gamma_2)$
we compute
\begin{eqnarray*}
I_{\Delta^s(\Gamma_1 \uplus \Gamma_2)}
&=&
\gin( I_{\Gamma_1}+ I_{\Gamma_2} + Q)^\sigma\\
&=&
\gin( \gin_n(I_{\Gamma_1})+ \gin_m(I_{\Gamma_2}) + Q)^\sigma\\
&=&
\gin( \gin_n(\gin_n(I_{\Gamma_1})^{\sigma_n})+ \gin_m(\gin_m(I_{\Gamma_2})^{\sigma_m}) + Q)^\sigma\\
&=&
\gin( \gin_n(I_{\Gamma_1})^{\sigma_n}+ \gin_m(I_{\Gamma_2})^{\sigma_m} + Q)^\sigma
=
I_{\Delta^s(\Delta^s(\Gamma_1) \uplus \Delta^s(\Gamma_2))}.
\end{eqnarray*}
The first and fifth equality are the definitions of
the corresponding ideals.
The second and forth equality follow from \ref{Nevo1}.
The third equality follows from the proof in \cite[Theorem 8.19]{HE01}.
\end{proof}

%
%
%
\section{Algebraic properties of fibre products}
\label{properties}

It is easy to see how homological properties transfer from $A$ and $B$ to $A\circ B$.
The main point is that the short exact Sequence (\ref{strfibre})  allows to compare how to control local cohomology.
In this section we show that the componentwise linearity behaves well with respect to fibre products.
Recall the following definition of Herzog-Hibi \cite{HEHI99}.
A graded ideal $L$ in some polynomial ring over the field $K$ is called {\em componentwise linear}
if for all $k\in \N$ the ideal
$L_{\langle k \rangle}$ has a $k$-linear resolution (i.e. $\Tor_i(L_{\langle k \rangle},K)_{i+j}=0$ for $j\neq k$)
where $L_{\langle k \rangle}$ is the ideal generated by all elements of degree $k$ of $L$.

In the proof we will use the following criterion of Herzog-Reiner-Welker \cite[Theorem 17]{HEREWE99}
to check whether an ideal is componentwise linear:
Let $L$ be a graded ideal in a polynomial ring $T$.
Then $L$ is componentwise linear
if and only if for all $k \in \Z$ we have $\reg T/L_{\leq k} \leq k-1$ where
$L_{\leq k}$ is the ideal generated by all elements of $L$
of degree smaller or equal to $k$ and $\reg M$ denotes the Castelnuovo-Mumford regularity of a
f.g.\ graded $T$-module $M$.
Note that in \cite{HEREWE99} this result is only stated for monomial ideals,
but the proof works also, more generally, for graded ideals.
Recall that the Castelnuovo-Mumford regularity can be computed via local cohomology as
$
\reg M=\max\{ j \in \Z : H^i_\mm(M)_{j-i} \neq 0 \text{ for some } i\}$
where $\mm$ denotes also the graded maximal ideal of $T$.
\begin{thm}
Let
$I \subset K[x_1,\dots,x_n]$ and
$J \subset K[x_{n+1},\dots,x_{n+m}]$
be graded ideals such that $I \subseteq \mm_n^2$ and $J \subseteq \mm_m^2$.
Then
$I$ and $J$ are componentwise linear if and only if $F(I,J)$ is componentwise linear.
\end{thm}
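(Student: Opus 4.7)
My approach combines the Herzog--Reiner--Welker regularity criterion recalled above with the structural fibre product sequence $(\ref{strfibre})$ and a single local cohomology computation. The hypothesis $I\subseteq \mm_n^2$, $J\subseteq \mm_m^2$, together with the fact that $Q$ is generated in degree $2$, yields $F(I,J)_{\leq k}=0$ for $k\leq 1$ and
$$F(I,J)_{\leq k}=I_{\leq k}+J_{\leq k}+Q$$
for every $k\geq 2$. Setting $A_k=K[x_1,\dots,x_n]/I_{\leq k}$ and $B_k=K[x_{n+1},\dots,x_{n+m}]/J_{\leq k}$, this identifies $S/F(I,J)_{\leq k}$ with the fibre product $A_k\circ B_k$. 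Since the $k\leq 1$ portion of the HRW criterion is automatic on both sides under our hypothesis, the theorem reduces to proving, for every $k\geq 2$, the equivalence
$$\reg(A_k\circ B_k)\leq k-1\ \Longleftrightarrow\ \max(\reg A_k,\reg B_k)\leq k-1,$$
where regularity is taken over $S$; since it is independent of the ambient polynomial ring, this is compatible with the HRW criterion applied to $I$ and $J$ in their natural rings.

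To establish the equivalence I apply the long exact sequence in local cohomology to $(\ref{strfibre})$ with $A=A_k$, $B=B_k$. Because $H^0_\mm(K)=K$ is concentrated in degree $0$ and $H^i_\mm(K)=0$ for $i\geq 1$, I obtain $H^i_\mm(A_k\circ B_k)\cong H^i_\mm(A_k)\oplus H^i_\mm(B_k)$ for every $i\geq 2$, while for $i=0,1$ the modules $H^i_\mm(A_k\circ B_k)$ and $H^i_\mm(A_k\oplus B_k)$ differ only by a subquotient of $K$ that lives in degree $0$. Translating into regularities via $\reg M=\max_i\{i+\max\{j:H^i_\mm(M)_j\neq 0\}\}$ gives the two-sided comparison
$$\reg(A_k\circ B_k)\leq \max(\reg A_k,\reg B_k,1),\qquad \max(\reg A_k,\reg B_k)\leq \max(\reg(A_k\circ B_k),0).$$
Because $k\geq 2$, the additive $1$ and $0$ in these inequalities are both dominated by $k-1$, and the displayed equivalence follows at once.

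The only technical friction in the argument is controlling the degree-$0$ contribution of $H^0_\mm(K)=K$ in the long exact sequence; this is precisely what obliges us to handle $k\leq 1$ separately and makes the hypothesis $I\subseteq \mm_n^2$, $J\subseteq \mm_m^2$ essential, as it ensures $F(I,J)_{\leq k}=0$ for $k\leq 1$ so that the HRW criterion is automatic there on both sides and does not interact with the low-degree shift coming from $K$.
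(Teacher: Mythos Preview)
Your proof is correct and follows essentially the same approach as the paper: both use the Herzog--Reiner--Welker truncation criterion, identify $F(I,J)_{\leq k}=I_{\leq k}+J_{\leq k}+Q$ for $k\geq 2$, apply the long exact local cohomology sequence to the fibre product sequence (\ref{strfibre}), and exploit that $H^i_\mm(K)$ is concentrated in degree $0$ to compare regularities. The only cosmetic difference is that the paper isolates the graded pieces $j\geq 2$ to obtain literal isomorphisms $H^i_\mm(S/F(I,J)_{\leq k})_{j-i}\cong H^i_{\mm_n}(A_k)_{j-i}\oplus H^i_{\mm_m}(B_k)_{j-i}$, whereas you package the same information into the pair of regularity inequalities; both routes lead to the same equivalence.
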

\begin{proof}
At first we note that $F(I,J)_{\leq k}=0$
for $k \leq 1$,
and for $k \geq 2$  we have
$$
F(I,J)_{\leq k}
=
(I+J+Q)_{\leq k}
=
I_{\leq k}+J_{\leq k}+Q
=
N(I_{\leq k},J_{\leq k}).
$$
The short exact Sequence (\ref{strfibre}) with the current notation  is:
$$
0
\to
S/F(I,J)_{\leq k}
\to
S/(I_{\leq k} + \mm_m)
\oplus
S/(J_{\leq k} + \mm_n)
\to
S/\mm
\to
0.
$$
It induces the long exact sequence of local cohomology modules
in degree $j-i$
$$
\dots \to
H_\mm^{i-1} (S/\mm)_{j-1-(i-1)}
\to
H_\mm^{i}( S/F(I,J)_{\leq k})_{j-i}
$$
$$
\to
H_\mm^{i}(S/(I_{\leq k} + \mm_m))_{j-i}
\oplus
H_\mm^{i}(S/(J_{\leq k} + \mm_n))_{j-i}
\to
H_\mm^{i}(S/\mm)_{j-i}
\to \dots
$$
Observe that for $j\geq 2$ we have that
$H_\mm^{i-1} (S/\mm)_{j-1-(i-1)}=H_\mm^{i}(S/\mm)_{j-i}=0$ because $S/\mm$ has regularity $0$.
The ring change isomorphism for local cohomology modules yields
$$
H_\mm^{i}(S/(I_{\leq k} + \mm_m))\cong H_{\mm_n}^{i}(K[x_1,\dots,x_n]/I_{\leq k})
$$
and
$$
H_\mm^{i}(S/(J_{\leq k} + \mm_n))\cong H_{\mm_m}^{i}(K[x_{n+1},\dots,x_{n+m}]/J_{\leq k})
$$
as graded $K$-vector spaces.
Thus we get for $j \geq 2$ the isomorphisms
$$
H_\mm^{i}( S/F(I,J)_{\leq k})_{j-i}
\cong
H_{\mm_n}^{i}(K[x_1,\dots,x_n]/I_{\leq k})_{j-i}
\oplus
H_{\mm_m}^{i}(K[x_{n+1},\dots,x_{n+m}]/J_{\leq k})_{j-i}
$$
as $K$-vector spaces. Since the regularities of all modules in questions are greater or equal to 2,
we obtain that
$\reg S/F(I,J)_{\leq k} \leq k$ if and only if
$\reg K[x_1,\dots,x_n]/I_{\leq k} \leq k$
and
$\reg K[x_{n+1},\dots,x_{n+m}]/J_{\leq k} \leq k$.
As noted above, this concludes the proof.
\end{proof}

\end{document}